\DeclareMathOperator{\RE}{Re} \DeclareMathOperator{\IM}{Im}
\numberwithin{equation}{section}
\newtheorem{theorem}{Theorem}[section]
\newtheorem{lemma}[theorem]{Lemma}
\theoremstyle{remark}
\newtheorem{remark}[theorem]{Remark}
\newtheorem{example}[theorem]{Example}
\begin{document}

\title{Starlikeness, convexity and close-to-convexity of harmonic mappings}

\thanks{The research work of the first author is supported by research fellowship from Council of Scientific and
Industrial Research (CSIR), New Delhi.}

\author[S. Nagpal]{Sumit Nagpal}
\address{Department of Mathematics, University of Delhi,
Delhi--110 007, India}
\email{sumitnagpal.du@gmail.com }

\author[V. Ravichandran]{V. Ravichandran}

\address{Department of Mathematics, University of Delhi,
Delhi--110 007, India}
\email{vravi68@gmail.com}

\begin{abstract}
In 1984, Clunie and Sheil-Small proved that a sense-preserving harmonic function whose analytic part is convex, is univalent and close-to-convex. In this paper, certain cases are discussed under which the conclusion of this result can be strengthened and extended to fully starlike and fully convex harmonic mappings. In addition, we investgate the properties of functions in the class $\mathcal{M}(\alpha)$ $(|\alpha|\leq 1)$ consisting of harmonic functions $f=h+\overline{g}$ with $g'(z)=\alpha zh'(z)$, $\RE (1+{zh''(z)}/{h'(z)} )>-{1}/{2} $ $ \mbox{ for } |z|<1  $. The coefficient estimates, growth results, area theorem and bounds for the radius of starlikeness and convexity of the class $\mathcal{M}(\alpha)$ are determined. In particular, the bound for the radius of convexity is sharp for the class $\mathcal{M}(1)$.
\end{abstract}

\keywords{harmonic mappings, convolution, close-to-convex, radius of starlikeness and convexity.}

\subjclass[2010]{Primary: 31A05, Secondary: 30C45, 30C50}

 \maketitle

\section{Introduction}
Let $\mathcal{H}$ denote the class of all complex-valued harmonic functions $f$ in the unit disk $\mathbb{D}=\{z \in \mathbb{C}:|z|<1\}$ normalized by $f(0)=0=f_{z}(0)-1$. Let $\mathcal{S}_{H}$ be the subclass of $\mathcal{H}$ consisting of univalent and sense-preserving functions. Such functions can be written in the form $f=h+\bar{g}$, where
\begin{equation}\label{eq1.1}
h(z)=z+\sum_{n=2}^{\infty}a_{n}z^{n}\quad\mbox{and}\quad g(z)=\sum_{n=1}^{\infty}b_{n}z^n
\end{equation}
are analytic and $|g'(z)|<|h'(z)| $ in $\mathbb{D}$. Let  $\mathcal{S}_{H}^{0}:=\{f \in \mathcal{S}_{H}:f_{\bar{z}}(0)=0\}$. Observe that $\mathcal{S}_{H}$ reduces to $\mathcal{S}$, the class of normalized univalent analytic functions, if the co-analytic part of $f$ is zero. In $1984$, Clunie and Sheil-Small \cite{cluniesheilsmall} investigated the class $\mathcal{S}_{H}$ as well as its geometric subclasses. Let $\mathcal{S}_{H}^{*}$, $\mathcal{K}_{H}$ and $\mathcal{C}_{H}$ (resp.\  $\mathcal{S}^*$, $\mathcal{K}$ and $\mathcal{C}$) be the subclasses of $\mathcal{S}_{H}$ (resp.\ $\mathcal{S}$) mapping $\mathbb{D}$ onto starlike, convex and close-to-convex domains, respectively. Denote by $\mathcal{S}_{H}^{*0}$, $\mathcal{K}_{H}^{0}$ and $\mathcal{C}_{H}^{0}$, the class consisting of those functions $f$ in $\mathcal{S}_{H}^{*}$, $\mathcal{K}_{H}$ and $\mathcal{C}_{H}$ respectively, for which $f_{\bar{z}}(0)=0$.

Recall that convexity and starlikeness are not hereditary properties for univalent harmonic mappings. In \cite{sumit2}, the authors introduced the notion of fully starlike mappings of order $\beta$ and fully convex mappings of order $\beta$ ($0\leq \beta <1$) that are characterized by the conditions
\[\frac{\partial}{\partial \theta}\arg f(r e^{i \theta})> \beta\quad \mbox{and}\quad\frac{\partial}{\partial \theta}\left(\arg \left\{\frac{\partial}{\partial \theta}f(r e^{i \theta})\right\}\right)> \beta,\quad (0\leq\theta< 2\pi, 0<r<1)\]
respectively. For $\beta=0$ these classes were studied by Chuaqui, Duren and Osgood \cite{chuaqui}. Let $\mathcal{FS}^{*}_{H}(\beta)$ and $\mathcal{FK}_{H}(\beta)$ $(0\leq\beta<1)$ denote the subclasses of $\mathcal{S}^{*}_{H}$ and $\mathcal{K}_{H}$ respectively consisting of fully starlike functions of order $\beta$ and fully convex functions of order $\beta$. Set $\mathcal{FS}_{H}^{*0}(\beta)=\mathcal{FS}_{H}^{*}(\beta) \cap \mathcal{S}_{H}^{*0}$ and $\mathcal{FK}_{H}^{0}(\beta)=\mathcal{FK}_{H}(\beta) \cap \mathcal{K}_{H}^{0}$.

Clunie and Sheil-Small \cite{cluniesheilsmall} gave a sufficient condition for a harmonic function to be univalent close-to-convex.

\begin{lemma}\cite[Lemma 5.15, p.\ 19]{cluniesheilsmall}\label{lem1.1}
Suppose that $H$, $G$ are analytic in $\mathbb{D}$ with $|G'(0)|<|H'(0)|$ and that $H+\epsilon G$ is close-to-convex for each $|\epsilon|=1$. Then $F=H+\overline{G}$ is harmonic univalent and close-to-convex in $\mathbb{D}$.
\end{lemma}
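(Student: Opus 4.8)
The plan is to verify three things in turn: that $F$ is sense-preserving, that $F$ is injective, and that $F(\mathbb{D})$ is a close-to-convex domain. The engine throughout is that each $\psi_\epsilon := H + \epsilon G$ ($|\epsilon|=1$) is close-to-convex, hence univalent and locally univalent. First I would record the sense-preserving property. Local univalence of $\psi_\epsilon$ gives $H'(z) + \epsilon G'(z) \neq 0$ for every $z \in \mathbb{D}$ and every $|\epsilon| = 1$; since $\{H'(z) + \epsilon G'(z) : |\epsilon| = 1\}$ is the circle of radius $|G'(z)|$ about $H'(z)$, this forces $|H'(z)| \neq |G'(z)|$ throughout $\mathbb{D}$. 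The function $|H'| - |G'|$ is continuous and nonvanishing on the connected set $\mathbb{D}$ and is positive at $0$ by hypothesis, so $|G'(z)| < |H'(z)|$ on $\mathbb{D}$. Hence $H'$ is zero-free, the dilatation $\omega := G'/H'$ satisfies $|\omega| < 1$, and the Jacobian $|H'|^2 - |G'|^2$ is positive, so $F$ is sense-preserving and locally univalent.

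Next I would prove injectivity by a direct comparison with a single member of the family. Suppose $F(z_1) = F(z_2)$ with $z_1 \neq z_2$, and set $a = H(z_1) - H(z_2)$, $b = G(z_1) - G(z_2)$. Taking conjugates in $H(z_1) + \overline{G(z_1)} = H(z_2) + \overline{G(z_2)}$ gives $a = -\overline{b}$, so $|a| = |b|$. If $b \neq 0$, put $\epsilon_0 = -a/b$; then $|\epsilon_0| = |a|/|b| = 1$ and $\psi_{\epsilon_0}(z_1) - \psi_{\epsilon_0}(z_2) = a + \epsilon_0 b = 0$, contradicting the univalence of $\psi_{\epsilon_0}$. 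If $b = 0$ then also $a = 0$, and every $\psi_\epsilon$ identifies $z_1$ and $z_2$, again a contradiction. Thus $F$ is univalent.

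It remains to show that $F(\mathbb{D})$ is close-to-convex, i.e.\ that its complement is a union of non-crossing rays; this is the main obstacle. The natural route is the tangent-angle (Kaplan) criterion: it suffices to show that along each circle $|z| = r$ the tangent angle of the image curve never decreases by more than $\pi$ on any subarc, and then let $r \to 1$. Differentiating,
\[
\frac{\partial}{\partial \theta} F(re^{i\theta}) = iz H'(z)\bigl(1 - w(z)\bigr), \qquad w(z) := \frac{\overline{z}\,\overline{G'(z)}}{z\,H'(z)},
\]
and since $|w(z)| = |\omega(z)| < 1$ we have $\RE(1 - w) > 0$, so the factor $1-w$ contributes a tangent-angle correction in $(-\pi/2,\pi/2)$. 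For the analytic map $\psi_\epsilon$ the analogous computation gives $\partial_\theta \psi_\epsilon = iz H'(z)(1 + \epsilon\omega(z))$, so $F$ and every $\psi_\epsilon$ share the common base angle $\arg(iz H'(z))$ and differ only in a bounded correction term.

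The crux is to transfer Kaplan's inequality, which holds for the family $\{\psi_\epsilon\}$ by hypothesis, to $F$. Because the co-analytic part enters through a conjugate, the correction term $\arg(1 - w)$ for $F$ cannot be reproduced by any single constant $\epsilon$ in $\arg(1 + \epsilon\omega)$: although $|w| = |\omega|$ at each point, the phase of $w$ is itself tied to the base angle. The delicate step, and the part I expect to be hardest, is therefore to choose $\epsilon$ depending on the subarc (or to exploit the whole family $|\epsilon| = 1$ simultaneously) so that the lower bound Kaplan's condition for $\psi_\epsilon$ places on the net decrease of the base angle still controls the net decrease of the tangent angle of $F$. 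Once this estimate is secured for every subarc and every $r < 1$, the image curves are close-to-convex, and passing to the limit $r \to 1$ shows that the complement of $F(\mathbb{D})$ is a union of non-crossing rays, completing the proof.
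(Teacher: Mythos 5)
The paper does not prove this lemma at all --- it is quoted verbatim from Clunie and Sheil-Small \cite[Lemma 5.15]{cluniesheilsmall} --- so there is no in-paper argument to compare against; your attempt has to stand on its own, and it does not. The first two thirds are correct and are the standard arguments: the observation that $\{H'(z)+\epsilon G'(z):|\epsilon|=1\}$ is a circle through $0$ exactly when $|H'(z)|=|G'(z)|$, combined with continuity and the normalization at the origin, does give $|G'|<|H'|$ and hence sense-preservation; and the choice $\epsilon_0=-a/b$ does reduce global injectivity of $F$ to univalence of a single $\psi_{\epsilon_0}$. These parts are complete.

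The genuine gap is the third claim, which is the actual content of the lemma: that $F(\mathbb{D})$ is a close-to-convex domain. You set up a Kaplan-type tangent-angle criterion along circles $|z|=r$ and then state that ``the delicate step\dots is to choose $\epsilon$ depending on the subarc\dots Once this estimate is secured\dots'' --- that is, you name the difficulty rather than resolve it. Two things make this more than a routine omission. First, the pointwise identity $|w(z)|=|\omega(z)|$ does not let you dominate $\arg(1-w)$ by $\arg(1+\epsilon\omega)$ for any fixed $\epsilon$ on a given arc, because the phase of $w(z)=\overline{zG'(z)}/(zH'(z))$ is coupled to $\arg(izH'(z))$ itself; there is no evident monotone comparison, and you give none. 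Second, and more fundamentally, Kaplan's theorem is a statement about analytic functions: for sense-preserving harmonic mappings the analogous tangent-angle condition is \emph{not} a known characterization of close-to-convexity, and establishing even sufficient Kaplan-type criteria in the harmonic setting (under extra hypotheses) is precisely the subject of the Bshouty--Lyzzaik paper cited in the references. So the route you propose rests on an unproved harmonic analogue of Kaplan's theorem in addition to the unsupplied estimate. The original Clunie--Sheil-Small proof avoids this entirely, arguing directly with the geometric definition (the complement of a close-to-convex domain as a union of non-crossing half-lines) and the degree/argument-principle machinery for harmonic functions developed earlier in their paper; some version of that, or another completed argument, is needed before the proof can be considered done.
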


Making use of Lemma \ref{lem1.1}, Clunie and Sheil-Small \cite{cluniesheilsmall} proved that if $f=h+\overline{g}$ is sense-preserving in $\mathbb{D}$ and $h+\epsilon g$ is convex for some $\epsilon$ $(|\epsilon|\leq 1)$, then $f$ is harmonic univalent and close-to-convex in $\mathbb{D}$. A particular case of this result is the following.

\begin{lemma}\label{lem1.2}
Let $f=h+\overline{g} \in \mathcal{H}$ be sense-preserving and $h \in \mathcal{K}$. Then $f \in \mathcal{C}_{H}^{0}$.
\end{lemma}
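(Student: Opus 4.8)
The plan is to read the statement off Lemma~\ref{lem1.1} with the choice $H=h$ and $G=g$, so that the entire argument comes down to checking the two hypotheses of that lemma. The first, $|G'(0)|<|H'(0)|$, is immediate: because $f$ is sense-preserving we have $|g'(z)|<|h'(z)|$ throughout $\mathbb{D}$, and at the origin this reads $|g'(0)|<|h'(0)|=1$. The second hypothesis, that $h+\epsilon g$ be close-to-convex for every $\epsilon$ with $|\epsilon|=1$, carries all of the content, and it is precisely here that the assumption $h\in\mathcal{K}$ is used.

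To verify it, I would take $h$ itself as the convex reference map in the analytic description of close-to-convexity: an analytic $\phi$ is close-to-convex if there is a convex $\psi$ with $\RE\bigl(\phi'/\psi'\bigr)>0$ on $\mathbb{D}$. With $\phi=h+\epsilon g$ and $\psi=h$ one computes
\[
\RE\frac{(h+\epsilon g)'(z)}{h'(z)}=\RE\left(1+\epsilon\,\frac{g'(z)}{h'(z)}\right)\ge 1-\left|\frac{g'(z)}{h'(z)}\right|>0,
\]
the last inequality being the sense-preserving bound $|g'/h'|<1$ together with $|\epsilon|=1$ (note $h'$ never vanishes, $h$ being convex univalent, so the quotient is well defined and $(h+\epsilon g)'\neq 0$). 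Since $h\in\mathcal{K}$ is a bona fide convex function, this shows $h+\epsilon g$ is close-to-convex for each unimodular $\epsilon$, and in fact for every $|\epsilon|\le 1$.

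With both hypotheses confirmed, Lemma~\ref{lem1.1} gives at once that $f=h+\overline{g}$ is harmonic, univalent and close-to-convex in $\mathbb{D}$, i.e.\ $f\in\mathcal{C}_{H}$. The only remaining point is the superscript in $\mathcal{C}_{H}^{0}$: since $f_{\bar z}(0)=\overline{g'(0)}$, membership in $\mathcal{C}_{H}^{0}$ is the coanalytic normalization $g'(0)=0$ attached to the functions under consideration. I do not expect a genuine obstacle anywhere: Lemma~\ref{lem1.1} supplies the univalence and close-to-convexity for free, and the single step with any substance is the positivity estimate above, which is essentially forced by the hypotheses. The one thing to state with care is the analytic definition of close-to-convexity, so that $h$ may legitimately serve as the reference convex map; once that is fixed, the lemma is a direct corollary.
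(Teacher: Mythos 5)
Your proof is correct and follows essentially the same route as the paper, which obtains Lemma \ref{lem1.2} as the special case $\epsilon_0=0$ of the Clunie--Sheil-Small criterion (itself proved via Lemma \ref{lem1.1} by exactly the positivity estimate $\RE\bigl((h+\epsilon g)'/h'\bigr)\ge 1-|g'/h'|>0$ that you write down). Your remark about the superscript in $\mathcal{C}_{H}^{0}$ is apt: the normalization $g'(0)=0$ is not forced by the stated hypotheses, and the paper's statement tacitly assumes it.
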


The conditions in the hypothesis of Lemma \ref{lem1.2} can't be relaxed, that is, if $f=h+\overline{g} \in \mathcal{H}$ is sense-preserving and $h$ is non-convex, then $f$ need not be even univalent. Similarly the conclusion of Lemma \ref{lem1.2} can't be strengthened, that is, if $f=h+\overline{g} \in \mathcal{H}$ is sense-preserving and $h \in \mathcal{K}$, then $f$ need not map $\mathbb{D}$ onto a starlike or convex domain. These two statements are illustrated by examples in Section 2 of the paper. In addition, we will consider the cases under which a sense-preserving harmonic function $f=h+\overline{g} \in \mathcal{H}$ with $h \in \mathcal{K}$ belongs to $\mathcal{FS}_H^{*0}(\beta)$ or $\mathcal{FK}_H^{0}(\beta)$. The following lemma will be needed in our investigation.

\begin{lemma}\cite{sumit1}\label{lem1.3}
Let $f=h+\bar{g} \in \mathcal{H}$ where $h$ and $g$ are given by \eqref{eq1.1} with $b_1=g'(0)=0$. Suppose that $\lambda \in (0,1]$.
\begin{itemize}
\item[(i)] If $\sum_{n=2}^{\infty}n(|a_n|+|b_n|)\leq \lambda$ then $f$ is fully starlike of order $2(1-\lambda)/(2+\lambda)$.
\item[(ii)] If $\sum_{n=2}^{\infty}n^2(|a_n|+|b_n|)\leq \lambda$ then $f$ is fully starlike of order $2(2-\lambda)/(4+\lambda)$. Moreover, $f$ is fully convex of order $2(1-\lambda)/(2+\lambda)$.
\end{itemize}
All these results are sharp.
\end{lemma}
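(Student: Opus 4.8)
The plan is to translate both geometric conditions into inequalities on the ratios that define them, and then to reduce each to a coefficient inequality by the triangle inequality. Writing $z=re^{i\theta}$ and differentiating, one computes $\partial_\theta f=i\,(zh'(z)-\overline{zg'(z)})$, so that
\[
\frac{\partial}{\partial\theta}\arg f(re^{i\theta})=\RE\left(\frac{zh'(z)-\overline{zg'(z)}}{h(z)+\overline{g(z)}}\right),
\]
and a second differentiation gives
\[
\frac{\partial}{\partial\theta}\arg\left\{\frac{\partial}{\partial\theta}f(re^{i\theta})\right\}=\RE\left(\frac{(zh'+z^2h'')+\overline{(zg'+z^2g'')}}{zh'-\overline{zg'}}\right).
\]
The basic tool is the elementary equivalence $\RE w>\beta\iff|w-1|<|w+1-2\beta|$, valid for every $w\in\mathbb{C}$ (the point $2\beta-1$ is the reflection of $1$ in the line $\RE w=\beta$), which lets me replace each condition ``$\RE(P/Q)>\beta$'' by the single inequality $|P-Q|<|P+(1-2\beta)Q|$. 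Note that this inequality automatically forces $Q\neq0$, so no separate argument for the non-vanishing of $f$ (resp.\ $\partial_\theta f$) is needed.

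For part (i) I take $P=zh'-\overline{zg'}$ and $Q=h+\overline{g}$. Expanding in the series \eqref{eq1.1} (with $b_1=0$) gives $P-Q=\sum_{n\geq2}(n-1)a_nz^n-\overline{\sum_{n\geq2}(n+1)b_nz^n}$ and $P+(1-2\beta)Q=(2-2\beta)z+\sum_{n\geq2}(n+1-2\beta)a_nz^n-\overline{\sum_{n\geq2}(n-1+2\beta)b_nz^n}$. Bounding the first modulus from above and the second from below by the triangle inequality, dividing by $2|z|$ and using $|z|^{n-1}\le1$ for $n\ge2$, the desired inequality $|P-Q|<|P+(1-2\beta)Q|$ follows once
\[
\sum_{n=2}^{\infty}\big[(n-\beta)|a_n|+(n+\beta)|b_n|\big]\le 1-\beta.
\]
Since $(n-\beta)|a_n|+(n+\beta)|b_n|\le(n+\beta)(|a_n|+|b_n|)\le(1+\beta/2)\,n(|a_n|+|b_n|)$ for $n\ge2$, the hypothesis $\sum_{n\ge2}n(|a_n|+|b_n|)\le\lambda$ makes the left-hand side $\le(1+\beta/2)\lambda$, and $(1+\beta/2)\lambda\le1-\beta$ is exactly $\beta\le2(1-\lambda)/(2+\lambda)$, which is the claimed order.

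Part (ii) is carried out by the same scheme. For full starlikeness the coefficient reduction is unchanged, but I instead estimate $(n+\beta)(|a_n|+|b_n|)\le(1/2+\beta/4)\,n^2(|a_n|+|b_n|)$ for $n\ge2$; with the hypothesis $\sum n^2(|a_n|+|b_n|)\le\lambda$ this tightens the order to $2(2-\lambda)/(4+\lambda)$. For full convexity I take $P=(zh'+z^2h'')+\overline{(zg'+z^2g'')}$ and $Q=zh'-\overline{zg'}$; the same expansion (now the analytic part contributes $n(n-1)$ and the co-analytic part $n(n+1)$) produces the condition $\sum_{n\ge2}[\,n(n-\beta)|a_n|+n(n+\beta)|b_n|\,]\le1-\beta$, and the bound $n(n+\beta)\le(1+\beta/2)n^2$ for $n\ge2$ together with $\sum n^2(|a_n|+|b_n|)\le\lambda$ returns $2(1-\lambda)/(2+\lambda)$.

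Finally, sharpness is established by exhibiting extremal functions that saturate the coefficient hypotheses at the critical index $n=2$, where all the estimates $(n+\beta)/n$ and $(n+\beta)/n^2$ are attained with equality; for instance quadratic maps $f(z)=z+a_2z^2+\overline{b_2z^2}$ with $2(|a_2|+|b_2|)=\lambda$ in (i) and $4(|a_2|+|b_2|)=\lambda$ in (ii), for which the relevant angular derivative tends to $\beta$ along a suitable radius as $r\to1$. I expect the principal obstacle to be bookkeeping rather than conceptual: keeping the two analytic parts with their distinct weights $(n-\beta)$ and $(n+\beta)$ straight through each expansion, and confirming that the optimization over $n$ is genuinely worst at $n=2$, which is precisely what both forces the stated orders and makes the extremal functions quadratic.
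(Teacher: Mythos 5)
First, a remark on the comparison itself: the paper does not prove Lemma \ref{lem1.3} --- it is quoted from \cite{sumit1} --- so there is no in-paper proof to measure you against. On its own terms, your central computation is sound: the reduction of $\RE(P/Q)>\beta$ to $|P-Q|<|P+(1-2\beta)Q|$, the series expansions leading to the sufficient conditions $\sum_{n\ge 2}\bigl[(n-\beta)|a_n|+(n+\beta)|b_n|\bigr]\le 1-\beta$ (and its analogue with the extra factor $n$ for convexity), and the elementary bounds $(n+\beta)\le(1+\beta/2)n$, $(n+\beta)\le(\tfrac12+\tfrac{\beta}{4})n^2$, $n(n+\beta)\le(1+\beta/2)n^2$ for $n\ge 2$ do reproduce exactly the three stated orders. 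This is essentially the coefficient-estimate route of the cited source.

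Two points are genuinely incomplete. First, in this paper ``fully starlike (convex) of order $\beta$'' is used as membership in $\mathcal{FS}^{*}_{H}(\beta)\subset\mathcal{S}^{*}_{H}$ (resp.\ $\mathcal{FK}_{H}(\beta)\subset\mathcal{K}_{H}$): the theorems that invoke the lemma conclude $f\in\mathcal{FS}_{H}^{*0}(\cdot)$, a class of univalent sense-preserving maps. Your argument establishes only the angular-derivative inequality on each circle; you must still show $|g'(z)|<|h'(z)|$ and univalence. Both do follow from the same hypotheses --- $\sum_{n\ge2}n(|a_n|+|b_n|)\le\lambda\le 1$ gives $|g'(z)|\le\sum n|b_n||z|^{n-1}<1-\sum n|a_n||z|^{n-1}\le|h'(z)|$, and univalence then follows by the argument-principle/degree argument for sense-preserving harmonic maps satisfying $\partial_\theta\arg f>0$ on every circle --- but the step cannot be omitted. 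Second, your sharpness example is wrong as stated: because the weight attached to $|b_n|$ is $n+\beta$ while that attached to $|a_n|$ is only $n-\beta$, a quadratic $z+a_2z^2+\overline{b_2z^2}$ with $2(|a_2|+|b_2|)=\lambda$ saturates the bound only when $a_2=0$. For (i) the correct extremal is $f(z)=z+\overline{(\lambda/2)z^2}$, for which $\RE(P/Q)=(1-\lambda r)/(1+\lambda r/2)\to 2(1-\lambda)/(2+\lambda)$ as $z=r\to 1^{-}$ along the positive axis, whereas putting the mass on $a_2$ yields the strictly larger limit $2(1-\lambda)/(2-\lambda)$; the analogous adjustment ($b_2=\lambda/4$, $a_2=0$) is needed in (ii).
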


For $\alpha \in \mathbb{C}$ with $|\alpha|\leq 1$, let $\mathcal{M}(\alpha)$ denote the set of all harmonic functions $f=h+\bar{g}\in \mathcal{H}$ that satisfy
\[g'(z)=\alpha zh'(z) \quad \mbox{and}\quad \RE \left(1+\frac{zh''(z)}{h'(z)}\right)>-\frac{1}{2} \mbox{ for all } z \in \mathbb{D}.\]
In \cite{mocanu}, Mocanu conjectured that the functions in the class $\mathcal{M}(1)$ are univalent in $\mathbb{D}$. In \cite{lyzzaik}, Bshouty and Lyzzaik proved this conjecture by establishing that $\mathcal{M}(1)\subset \mathcal{C}_{H}^{0}$. In fact, $\mathcal{M}(\alpha)\subset \mathcal{C}_{H}^{0}$ for each $|\alpha|\leq 1$. The coefficient estimates, growth results, area theorem and convolution properties for the class $\mathcal{M}(\alpha)$ $(|\alpha|\leq 1)$ are obtained in the last section of the paper. The bounds for the radius of starlikeness and convexity of the class $\mathcal{M}(\alpha)$ are also determined. The bound for the radius of convexity turns out to be sharp for the class $\mathcal{M}(1)$ with the extremal function
\[F(z):=\RE \frac{z}{(1-z)^2}+i \IM \frac{z}{1-z} \in \mathcal{M}(1).\]
The radius of starlikeness of $F$ is also determined. The convolution properties of $F$ with certain right-half plane mappings are also discussed.

\section{Sufficient conditions for starlikeness and convexity}
Neither the conditions in the hypothesis of Lemma \ref{lem1.2} can be relaxed nor the conclusion of Lemma \ref{lem1.2} can be strengthened. The first two examples of this section verify the truth of this statement.

\begin{example}
Let $h(z)=z-z^2/2 \in \mathcal{S}^*$ and $g(z)=z^2/2-z^3/3$ so that $h'(z)=zg'(z)$. Then $h$ is non-convex and $f=h+\overline{g}$ is sense-preserving in $\mathbb{D}$. But $f$ is not even univalent in $\mathbb{D}$ since $f(z_0)=f(\overline{z}_0)=3/4$ where $z_0=(3+\sqrt{3} i)/4 \in \mathbb{D}$.
\end{example}

\begin{example}
Let $h(z)=z/(1-z) \in \mathcal{K}$ and $g'(z)=zh'(z)$. Then the function
\[f(z)=h(z)+\overline{g(z)}=\frac{z}{1-z}+\overline{\frac{z}{1-z}+\log (1-z)}\]
belongs to $\mathcal{C}_{H}^{0}$ by Lemma \ref{lem1.2}. The image of the unit disk under $f$ is shown in Figure \ref{fig2.1} as plots of the images of equally spaced radial segments and concentric circles. Clearly $f(\mathbb{D})$ is a non-starlike domain.

\begin{figure}[here]
\centering
\includegraphics{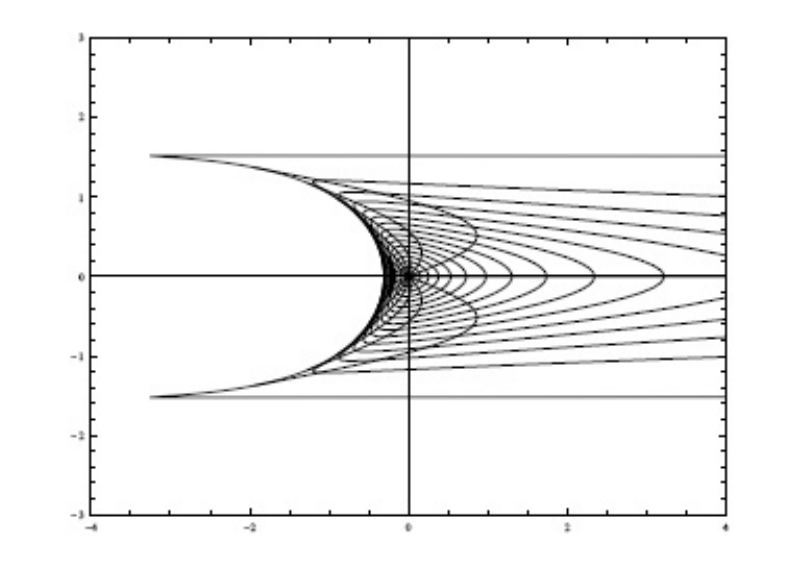}
\caption{Graph of the function $f(z)=2\RE \frac{z}{1-z}+\log (1-\overline{z})$.}\label{fig2.1}
\end{figure}
\end{example}

Now we will consider certain cases under which the conclusion of Lemma \ref{lem1.2} can be extended to fully starlike mappings of order $\beta$ and fully convex mappings of order $\beta$ $(0\leq \beta <1)$.

\begin{theorem}\label{th2.3}
Let $f=h+\overline{g} \in \mathcal{H}$ where $h$ and $g$ are given by \eqref{eq1.1}, and let $\alpha \in \mathbb{C}$. Further, assume that
\[g'(z)=\alpha z h'(z)\quad(z \in \mathbb{D})\quad \mbox{and}\quad \sum_{n=2}^{\infty} n^2 |a_n|\leq 1.\]
If $|\alpha|\leq 1$ then $f$ is univalent close-to-convex. If $|\alpha |\leq 1/3$ then $f$ is fully starlike of order $2(1-3|\alpha|)/(5+3|\alpha|)$.
\end{theorem}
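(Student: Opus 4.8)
The plan is to begin by extracting the coefficient relationship forced by $g'(z)=\alpha z h'(z)$. Writing $h'(z)=1+\sum_{n=2}^{\infty} n a_n z^{n-1}$ and comparing Taylor coefficients in $g'(z)=\sum_{n=1}^{\infty} n b_n z^{n-1}=\alpha z h'(z)$, one reads off $b_1=0$ and, for $n\ge 2$,
\[
b_n=\frac{\alpha(n-1)}{n}\,a_{n-1},\qquad (a_1:=1).
\]
In particular $g'(0)=b_1=0$, which is exactly the normalization required to invoke Lemma \ref{lem1.2} and Lemma \ref{lem1.3}.

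For the first assertion ($|\alpha|\le 1$), I would show that the hypothesis $\sum_{n\ge 2} n^2|a_n|\le 1$ already forces $h\in\mathcal{K}$: since $zh'(z)=z+\sum_{n\ge 2} n a_n z^n$ satisfies the classical starlikeness coefficient bound $\sum_{n\ge 2} n\cdot(n|a_n|)=\sum_{n\ge 2} n^2|a_n|\le 1$, the function $zh'$ is starlike, whence $h$ is convex by Alexander's theorem. It then remains only to verify that $f$ is sense-preserving, i.e.\ $|g'(z)|<|h'(z)|$ in $\mathbb{D}$. This is immediate, since $|g'(z)|=|\alpha|\,|z|\,|h'(z)|<|h'(z)|$ for $|\alpha|\le 1$ and $|z|<1$, where $h'(z)\neq 0$ follows from $\RE h'(z)>0$ (a direct consequence of $|h'(z)-1|\le\sum_{n\ge 2} n|a_n|\le\sum_{n\ge 2} n^2|a_n|\le 1$). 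With $h\in\mathcal{K}$ and $f$ sense-preserving, Lemma \ref{lem1.2} yields $f\in\mathcal{C}_{H}^{0}$, which is univalent and close-to-convex.

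For the second assertion ($|\alpha|\le 1/3$) I would apply Lemma \ref{lem1.3}(i), so the task reduces to estimating $\sum_{n\ge 2} n(|a_n|+|b_n|)$. Using the coefficient formula, $\sum_{n\ge 2} n|b_n|=|\alpha|\sum_{n\ge 2}(n-1)|a_{n-1}|=|\alpha|\bigl(1+\sum_{m\ge 2} m|a_m|\bigr)$, so that with $S:=\sum_{n\ge 2} n|a_n|$,
\[
\sum_{n\ge 2} n\bigl(|a_n|+|b_n|\bigr)=(1+|\alpha|)S+|\alpha|.
\]
The crucial quantitative step is that $\sum_{n\ge 2} n^2|a_n|\le 1$ gives considerably more than $S\le 1$: since $n^2\ge 2n$ for $n\ge 2$, we obtain $2S\le\sum_{n\ge 2} n^2|a_n|\le 1$, i.e.\ $S\le 1/2$. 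Substituting this bound yields
\[
\sum_{n\ge 2} n\bigl(|a_n|+|b_n|\bigr)\le\tfrac12(1+|\alpha|)+|\alpha|=\frac{1+3|\alpha|}{2}=:\lambda .
\]
For $|\alpha|\le 1/3$ one checks $\lambda\in(0,1]$, so Lemma \ref{lem1.3}(i) applies and gives that $f$ is fully starlike of order $2(1-\lambda)/(2+\lambda)$; a short computation simplifies this to $2(1-3|\alpha|)/(5+3|\alpha|)$, completing the proof. I expect the only genuine obstacle to be the sharpening $S\le 1/2$: without the factor-of-two gain furnished by $n^2\ge 2n$ the resulting $\lambda$ would exceed $1$ and Lemma \ref{lem1.3}(i) would fail to apply, so the entire order computation hinges on exploiting the full strength of $\sum n^2|a_n|\le 1$ rather than the weaker $\sum n|a_n|\le 1$.
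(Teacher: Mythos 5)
Your proposal is correct and follows essentially the same route as the paper: deduce $h\in\mathcal{K}$ from $\sum n^2|a_n|\le 1$ via Alexander's theorem, apply Lemma \ref{lem1.2} for close-to-convexity, then bound $\sum n(|a_n|+|b_n|)$ by $(1+3|\alpha|)/2$ using the coefficient relation $b_n=\alpha\frac{n-1}{n}a_{n-1}$ and the gain $n^2\ge 2n$, and finish with Lemma \ref{lem1.3}(i). The only difference is that you make explicit the sense-preserving check and the reduction $S\le 1/2$, which the paper leaves implicit or folds into a single displayed estimate.
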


\begin{proof}
The coefficient inequality $\sum_{n=2}^{\infty} n^2 |a_n|\leq 1$ implies that $h \in \mathcal{K}$ (see \cite{alexander}). So $f \in \mathcal{C}_{H}^{0}$ by Lemma \ref{lem1.2} if $|\alpha|\leq 1$. The relation $g'(z)=\alpha z h'(z)$ gives $b_1=0$ and
\[b_n=\alpha \frac{n-1}{n}a_{n-1},\quad (n\geq 2;\, a_1=1).\]
Consider
\begin{align*}
\sum_{n=2}^{\infty} n(|a_n|+|b_n|)&=\sum_{n=2}^{\infty} (n|a_n|+(n-1)|\alpha||a_{n-1}|)\\
                                  &=(1+|\alpha|)\sum_{n=2}^{\infty} n|a_n|+|\alpha|\\
                                  &\leq\frac{1}{2}(1+|\alpha|)\sum_{n=2}^{\infty}n^2 |a_n|+|\alpha|\\
                                  &\leq \frac{1}{2}(1+|\alpha|)+|\alpha|=\frac{1+3|\alpha|}{2}.
\end{align*}
By applying Lemma \ref{lem1.3}(i), it follows that $f \in \mathcal{FS}_{H}^{*0}(2(1-3|\alpha|)/(5+3|\alpha|))$ if $|\alpha|\leq 1/3$.
\end{proof}

\begin{theorem}
Let $f=h+\overline{g} \in \mathcal{H}$ where $h$ and $g$ are given by \eqref{eq1.1}, and let $\alpha \in \mathbb{C}$. Further, assume that
\[g'(z)=\alpha z h'(z)\quad(z \in \mathbb{D})\quad \mbox{and}\quad \sum_{n=2}^{\infty} n^3 |a_n|\leq 1.\]
If $|\alpha |\leq 2/11$ then $f$ is fully starlike of order $2(6-11|\alpha|)/(18+11|\alpha|)$. Moreover, $f$ is fully convex of order $2(2-11|\alpha|)/(10+11|\alpha|)$.
\end{theorem}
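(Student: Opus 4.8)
The plan is to mirror the argument of Theorem \ref{th2.3}, replacing the first part of Lemma \ref{lem1.3} with the second. The relation $g'(z)=\alpha z h'(z)$ yields exactly the same coefficient identity as before, namely $b_1=0$ and $b_n=\alpha\,\frac{n-1}{n}\,a_{n-1}$ for $n\geq 2$ (with $a_1=1$). Since both conclusions now concern fully starlike and fully convex \emph{orders}, the target is the quantity $\sum_{n=2}^{\infty} n^2(|a_n|+|b_n|)$ that drives Lemma \ref{lem1.3}(ii); I would estimate it in terms of the hypothesis $\sum_{n=2}^{\infty} n^3|a_n|\leq 1$.

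The core computation is to substitute the coefficient identity into the $|b_n|$ sum and reindex. Writing $\sum_{n\geq 2} n^2|b_n|=|\alpha|\sum_{n\geq 2} n(n-1)|a_{n-1}|$ and shifting to $m=n-1$ turns this into $|\alpha|\sum_{m\geq 1}(m^2+m)|a_m|$, where the $m=1$ term contributes $2|\alpha|$ because $a_1=1$. For the remaining tails I would use the elementary bounds $n^2\leq n^3/2$ and $n^2+n\leq 3n^3/4$, both valid for $n\geq 2$ (with equality at $n=2$), to get $\sum_{n\geq 2} n^2|a_n|\leq 1/2$ and $\sum_{n\geq 2}(n^2+n)|a_n|\leq 3/4$. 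Combining these gives
\[\sum_{n=2}^{\infty} n^2(|a_n|+|b_n|)\leq \frac{1}{2}+|\alpha|\left(2+\frac{3}{4}\right)=\frac{2+11|\alpha|}{4}=:\lambda,\]
and $\lambda\leq 1$ precisely when $|\alpha|\leq 2/11$, which explains the threshold in the statement.

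With this value of $\lambda$, Lemma \ref{lem1.3}(ii) immediately yields that $f$ is fully starlike of order $2(2-\lambda)/(4+\lambda)$ and fully convex of order $2(1-\lambda)/(2+\lambda)$; substituting $\lambda=(2+11|\alpha|)/4$ simplifies these to $2(6-11|\alpha|)/(18+11|\alpha|)$ and $2(2-11|\alpha|)/(10+11|\alpha|)$, matching the asserted orders. The only delicate point is the bookkeeping in the core estimate: one must reindex the $|b_n|$ sum correctly, isolate the $a_1=1$ contribution producing the constant $2|\alpha|$, and choose the two bounds so that they are sharp at $n=2$. Keeping any of these loose would alter the constant $11/4$ and hence both the cutoff $2/11$ and the two order formulas, so no genuinely new idea beyond Theorem \ref{th2.3} is needed, only careful arithmetic.
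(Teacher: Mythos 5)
Your proposal is correct and follows essentially the same route as the paper: reduce to the sum $\sum_{n\geq 2} n^2(|a_n|+|b_n|)$, use $b_n=\alpha\frac{n-1}{n}a_{n-1}$ to reindex, bound the tails against $\sum n^3|a_n|\leq 1$ (your single estimate $n^2+n\leq \tfrac{3}{4}n^3$ is just the paper's two bounds $n^2\leq n^3/2$ and $n\leq n^3/4$ combined), arrive at $\lambda=(2+11|\alpha|)/4$, and apply Lemma \ref{lem1.3}(ii). The arithmetic and the resulting orders all check out.
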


\begin{proof}
To apply Lemma \ref{lem1.3}(ii), consider the sum
\begin{align*}
\sum_{n=2}^{\infty} n^2(|a_n|+|b_n|)&=\sum_{n=2}^{\infty} (n^2|a_n|+n(n-1)|\alpha||a_{n-1}|)\\
                                  &=(1+|\alpha|)\sum_{n=2}^{\infty} n^2|a_n|+2|\alpha|+|\alpha|\sum_{n=2}^{\infty} n|a_n|\\
                                  &\leq\frac{1}{2}(1+|\alpha|)\sum_{n=2}^{\infty}n^3 |a_n|+2|\alpha|+\frac{1}{4}|\alpha|\sum_{n=2}^{\infty} n^3|a_n|\\
                                  &\leq \frac{1}{2}(1+|\alpha|)+2|\alpha|+\frac{1}{4}|\alpha|=\frac{2+11|\alpha|}{4}.
\end{align*}
Hence $f \in \mathcal{FS}_{H}^{*0}(2(6-11|\alpha|)/(18+11|\alpha|)) \cap \mathcal{FK}_{H}^0(2(2-11|\alpha|)/(10+11|\alpha|))$.
\end{proof}

\begin{remark}
If $h \in \mathcal{K}$ then the harmonic function $f=h+\epsilon \overline{h}$ is univalent and fully convex of order $0$ for each $|\epsilon|<1$. Similarly, if $h \in \mathcal{S}^*$ then the harmonic function $f=h+\epsilon \overline{h}$ is fully starlike of order $0$ for each $|\epsilon|<1$.
\end{remark}

\section{Class $\mathcal{M}(\alpha)$ ($|\alpha|\leq 1$)}\label{sec2}
In this section, we will investigate the properties of functions in the class $\mathcal{M}(\alpha)$.

\begin{theorem}\label{th3.1}
Let $\alpha \in \mathbb{C}$ with $|\alpha|\leq 1$. Then we have the following.
\begin{itemize}
  \item [(i)] $\mathcal{M}(\alpha)\subset \mathcal{C}_{H}^{0}$.
  \item [(ii)] (Coefficient estimates) If $f=h+\bar{g} \in \mathcal{M}(\alpha)$ where $h$ and $g$ are given by \eqref{eq1.1}, then $b_1=g'(0)=0$,
\[|a_n|\leq \frac{n+1}{2}\quad \mbox{and}\quad|b_n|\leq\frac{n-1}{2}|\alpha|\]
for $n=2,3,\ldots$. Moreover, these bounds are sharp for each $\alpha$.
  \item [(iii)] (Growth theorem) The inequality
\[|f(z)|\leq \frac{|z|}{(1-|z|)^2}\left[1-\frac{1}{2}(1-|\alpha|)|z|\right],\quad z\in \mathbb{D},\]
holds for every function $f \in \mathcal{M}(\alpha)$. This bound is sharp.
  \item [(iv)] (Area theorem) The area  of the image of each function $f \in \mathcal{M}(\alpha)$ is greater than or equal to $\pi(1-|\alpha|^{2}/2)$ and this minimum is attained only for the function $g_{\alpha}(z)=z+\alpha \bar{z}^2/2$.
\end{itemize}
\end{theorem}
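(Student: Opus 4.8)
The four assertions rest on one structural observation: if one sets $F(z)=zh'(z)$, then the defining inequality $\RE\bigl(1+zh''(z)/h'(z)\bigr)>-1/2$ is precisely $\RE\bigl(zF'(z)/F(z)\bigr)>-1/2$, so $F$ is starlike of order $-1/2$. Parts (ii)--(iv) will be read off from the classical coefficient, distortion and area estimates for this subordination class together with the relation $g'=\alpha z h'$, while part (i) carries the genuinely harmonic content and is the main obstacle. For (i) I would apply Lemma~\ref{lem1.1} with $H=h$, $G=g$. Since $g'(z)=\alpha z h'(z)$ forces $g'(0)=0$, we immediately get $b_1=0$ and $|G'(0)|=0<1=|H'(0)|$, so everything reduces to showing that $h+\epsilon g$ is close-to-convex for each $|\epsilon|=1$. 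Here $(h+\epsilon g)'(z)=h'(z)(1+\epsilon\alpha z)$ with $|\epsilon\alpha|\le 1$; writing $\mu=\epsilon\alpha=|\mu|e^{i\phi}$ and using $1+\mu z=(1-|\mu|)\cdot 1+|\mu|(1+e^{i\phi}z)$, one obtains $(h+\epsilon g)'=(1-|\mu|)h'+|\mu|(h+e^{i\phi}\tilde g)'$, where $\tilde g'=zh'$ so that $h+\overline{\tilde g}\in\mathcal{M}(1)\subset\mathcal{C}_{H}^{0}$ by Bshouty and Lyzzaik. The delicate point is that close-to-convexity is not preserved under convex combinations in general: I would need the Bshouty--Lyzzaik analysis to furnish a single starlike function $\Phi$ with $\RE\bigl(zh'(z)(1+\epsilon z)/\Phi(z)\bigr)>0$ for all $|\epsilon|=1$. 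Averaging the cases $\epsilon$ and $-\epsilon$ then gives $\RE(zh'/\Phi)>0$, and the convex-combination form yields $\RE\bigl(z(h+\epsilon g)'/\Phi\bigr)>0$ at once. Extracting this uniform reference $\Phi$ is the crux of the argument.

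For (ii), since $F=zh'=z+\sum_{n\ge2}na_nz^n$ is starlike of order $-1/2$, the sharp coefficient bound for starlike functions of order $\gamma$, namely $|A_n|\le\prod_{k=2}^{n}(k-2\gamma)/(n-1)!$, specializes with $\gamma=-1/2$ to $|na_n|\le n(n+1)/2$, that is $|a_n|\le(n+1)/2$. The relation $b_n=\alpha(n-1)a_{n-1}/n$ then gives $|b_n|\le|\alpha|(n-1)/2$ for $n\ge2$ (using $|a_{n-1}|\le n/2$, and $|a_1|=1$ at $n=2$). Both bounds are attained simultaneously by $h(z)=z(2-z)/\bigl(2(1-z)^2\bigr)$, for which $h'(z)=(1-z)^{-3}$ and $a_n=(n+1)/2$.

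For (iii) I would bound $|f|\le|h|+|g|$. The distortion theorem for $F=zh'$ starlike of order $-1/2$ gives $|zh'(z)|\le|z|/(1-|z|)^3$, hence $|h'(z)|\le(1-|z|)^{-3}$; integrating along the radius to $z$ yields $|h(z)|\le|z|(2-|z|)/\bigl(2(1-|z|)^2\bigr)$. Integrating $|g'(z)|\le|\alpha||z|(1-|z|)^{-3}$ together with the elementary evaluation $\int_0^{s}t(1-t)^{-3}\,dt=s^2/\bigl(2(1-s)^2\bigr)$ gives $|g(z)|\le|\alpha||z|^2/\bigl(2(1-|z|)^2\bigr)$. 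Adding and simplifying reproduces exactly the stated bound, with equality at $z=r$ for the same extremal $h$ (and $\alpha\ge0$), the general case following by rotation. This part is routine once the distortion estimate for $F$ is invoked.

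Finally, for (iv), because $f$ is sense-preserving and univalent the image area equals $\iint_{\mathbb{D}}\bigl(|h'|^2-|g'|^2\bigr)\,dA$. Expanding in power series and using orthogonality, $\iint_{\mathbb{D}}|z|^{2(n-1)}\,dA=\pi/n$, gives the area as $\pi\bigl[1+\sum_{n\ge2}n|a_n|^2-\sum_{n\ge2}n|b_n|^2\bigr]$. Substituting $|b_n|^2=|\alpha|^2(n-1)^2|a_{n-1}|^2/n^2$ and reindexing converts this into $\pi(1-|\alpha|^2/2)+\pi\sum_{n\ge2}\bigl(n-|\alpha|^2n^2/(n+1)\bigr)|a_n|^2$. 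Since $|\alpha|^2\le1<(n+1)/n$, each coefficient $n-|\alpha|^2n^2/(n+1)$ is strictly positive, so the area is at least $\pi(1-|\alpha|^2/2)$, with equality exactly when $a_n=0$ for all $n\ge2$, i.e.\ $h(z)=z$ and $f=g_\alpha$. I expect no serious difficulty in (ii)--(iv); they become bookkeeping once the starlike-of-order-$(-1/2)$ viewpoint and the relation $b_n=\alpha(n-1)a_{n-1}/n$ are fixed, whereas the close-to-convexity in (i) is where the real work lies.
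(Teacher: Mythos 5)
Your parts (ii)--(iv) are essentially correct. For (ii) and (iii) you route everything through the observation that $zh'$ is starlike of order $-1/2$ and invoke the coefficient and growth theorems for that class; the paper instead quotes the bound $|a_n|\le (n+1)/2$ for close-to-convex functions of order $1/2$ and deduces (iii) by summing the coefficient estimates termwise. Both routes are standard, give the same extremal function, and the difference is cosmetic. Your part (iv) coincides with the paper's computation.

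Part (i), however, contains a genuine gap, and you flag it yourself. You correctly reduce, via Lemma~\ref{lem1.1}, to showing that $F_\epsilon=h+\epsilon g$ is close-to-convex for every $|\epsilon|=1$, and you write $F_\epsilon'=h'(1+\epsilon\alpha z)$ as a convex combination --- but, as you concede, close-to-convexity is not preserved under such combinations, and the ``single starlike reference function $\Phi$'' you would need is never produced. The paper closes exactly this gap, and it does so without any reference function, by verifying Kaplan's integral criterion directly:
\[
\RE\left(1+\frac{zF_\epsilon''(z)}{F_\epsilon'(z)}\right)=\RE\frac{\alpha\epsilon z}{1+\alpha\epsilon z}+\RE\left(1+\frac{zh''(z)}{h'(z)}\right)=\frac{1}{2}-\frac{1}{2}P_{\zeta}(\theta)+\RE\left(1+\frac{zh''(z)}{h'(z)}\right)>-\frac{1}{2}P_{\zeta}(\theta),
\]
where $z=re^{i\theta}$, $\zeta=-\bar{\alpha}\bar{\epsilon}r$ and $P_{\zeta}$ is the Poisson kernel. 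Since $P_{\zeta}\geq 0$ and integrates to $2\pi$ over the full circle, the integral of $\RE(1+zF_\epsilon''/F_\epsilon')$ over any arc is bounded below by $-\pi$, and Kaplan's theorem yields close-to-convexity of each $F_\epsilon$. As written, your argument for (i) does not go through; you should either adopt this Poisson-kernel/Kaplan argument or actually construct the uniform $\Phi$ you allude to.
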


\begin{proof}
To prove (i), let $f=h+\bar{g} \in \mathcal{M}(\alpha)$. Since $|g'(0)|<|h'(0)|$, it suffices to show that the analytic functions $F_{\epsilon}=h+\epsilon g$ are close-to-convex in $\mathbb{D}$ for each $|\epsilon|=1$, in view of Lemma \ref{lem1.1}. It is easy to verify that
\begin{align*}
\RE \left(1+\frac{zF''_{\epsilon}(z)}{F'_{\epsilon}(z)}\right)&=\RE \frac{\alpha \epsilon z}{1+\alpha \epsilon z}+\RE\left(1+\frac{zh''(z)}{h'(z)}\right)\\
                                                              &=\frac{1}{2}-\frac{1}{2}\frac{1-|\alpha \epsilon z|^2}{|1+\alpha \epsilon z|^2}+\RE\left(1+\frac{zh''(z)}{h'(z)}\right)\\
                                                              &=\frac{1}{2}-\frac{1}{2}P_{\zeta}(\theta)+\RE\left(1+\frac{zh''(z)}{h'(z)}\right)>-\frac{1}{2}P_{\zeta}(\theta)
\end{align*}
where $z=r e^{i \theta}$ ($0<r<1$), $\zeta=-\bar{\alpha}\bar{\epsilon} r$ and $P_{\zeta}(\theta)=(1-|\zeta|^2)/|e^{i \theta}-\zeta|^2$ $(|\zeta|<1)$ is the Poisson Kernel. Fix $\theta_1$, $\theta_2$ with $0<\theta_2 -\theta_1<2\pi$. Then
\[\int_{\theta_1}^{\theta_2} \RE \left(1+\frac{r e^{i \theta}F''_{\epsilon}(r e^{i \theta})}{F'_{\epsilon}(r e^{i \theta})}\right)\,d\theta \geq -\frac{1}{2}\int_{\theta_1}^{\theta_2}P_{\zeta}(\theta)\,d\theta=-\pi.\]
By the well-known Kaplan's theorem \cite{kaplan}, it follows that each $F_{\epsilon}$ is close-to-convex in $\mathbb{D}$ and hence $f \in \mathcal{C}_{H}^{0}$. This proves (i).

For the proof of (ii), note that the function $h$ is close-to-convex of order $1/2$ and hence its coefficients satisfy $|a_n|\leq (n+1)/2$ for $n=2,3,\ldots$ (see \cite{goodman,owa}). Regarding the bound for $b_n$, note that the relation $g'(z)=\alpha zh'(z)$ gives
\[(n+1)b_{n+1}=n\alpha a_{n},\quad n=1,2,\ldots\]
so that $|b_n|\leq (n-1)|\alpha|/2$. For sharpness, consider the functions
\begin{equation}\label{eq3.1}
f_{\alpha}(z)=\frac{1}{2}\left(\frac{z}{1-z}+\frac{z}{(1-z)^2}\right)-\overline{\frac{1}{2}\alpha\left(\frac{z}{1-z}-\frac{z}{(1-z)^2}\right)}\quad z \in \mathbb{D}, \quad |\alpha|\leq 1.
\end{equation}
The functions $f_\alpha \in \mathcal{M}(\alpha)$ for each $|\alpha|\leq 1$ and
\[f_{\alpha}(z)=z+\sum_{n=2}^{\infty}\frac{n+1}{2}z^{n}+\overline{\sum_{n=2}^{\infty}\frac{n-1}{2}\alpha z^{n}},\]
showing that the bounds are best possible. Figure \ref{fig3.1} depicts the image domain $f_{\alpha}(\mathbb{D})$ for $\alpha=\pm 1, \pm i$.

Using the estimates for $|a_n|$ and $|b_n|$, it follows that
\begin{align*}
|f(z)|&\leq |z|+\sum_{n=2}^{\infty}|a_{n}||z|^{n}+\sum_{n=2}^{\infty}|b_{n}||z|^{n}\\
      &\leq |z|+\frac{1}{2}\sum_{n=2}^{\infty}(n+1)|z|^{n}+\frac{1}{2}|\alpha|\sum_{n=2}^{\infty}(n-1)|z|^{n}\\
      &=|z|+\frac{1}{2}(1+|\alpha|)\sum_{n=2}^{\infty}n|z|^{n}+\frac{1}{2}(1-|\alpha|)\sum_{n=2}^{\infty}|z|^n\\
      &=\frac{|z|}{(1-|z|)^2}\left[1-\frac{1}{2}(1-|\alpha|)|z|\right].
\end{align*}
The bound is sharp with equality holding for the function $f_\alpha$ given by \eqref{eq3.1}. This proves (iii).

\begin{figure}[here]
  \centering
  \includegraphics{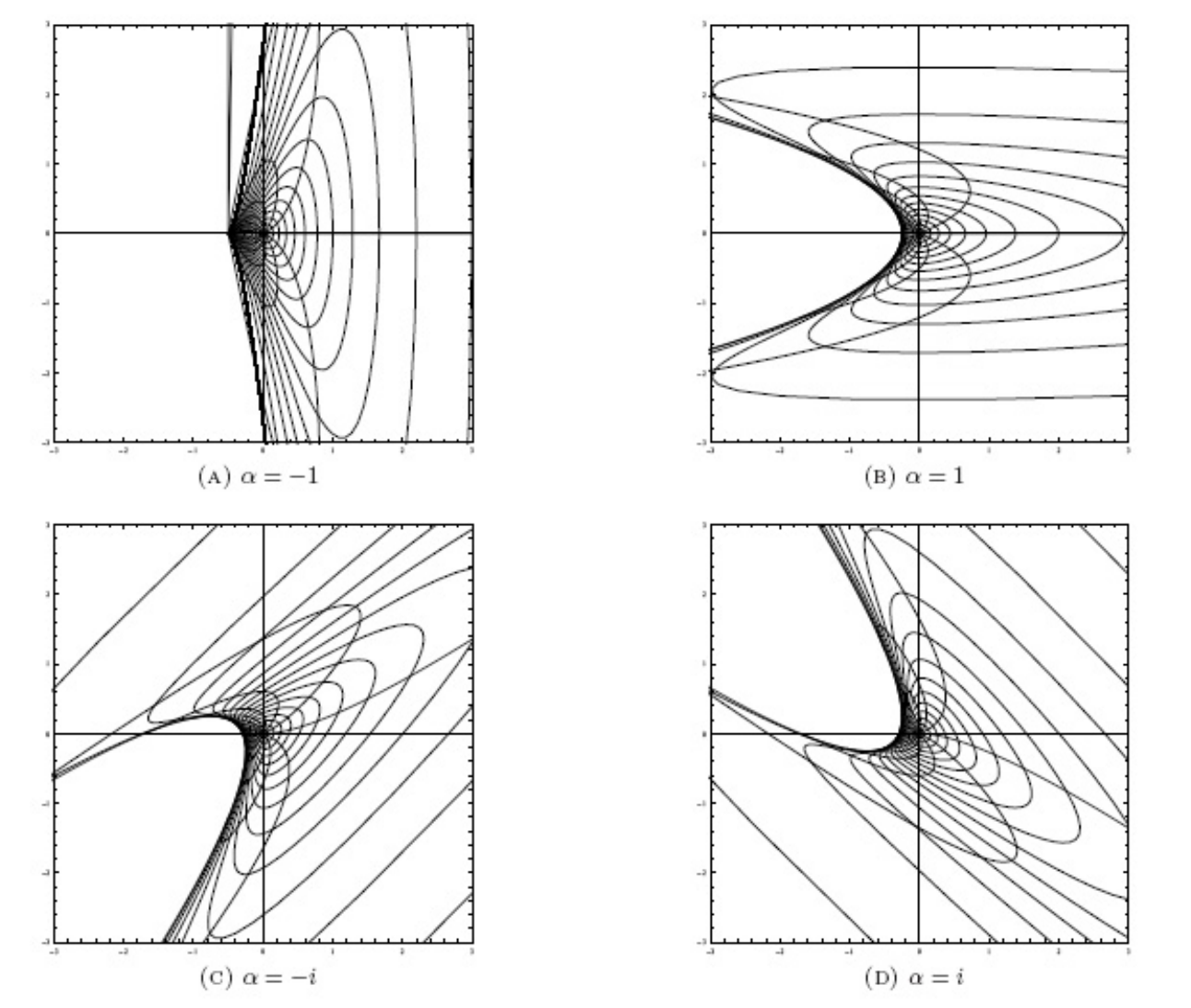}
 
  \caption{Graph of the function $f_{\alpha}$ for different values of $\alpha$.}\label{fig3.1}
\end{figure}

For the last part of the theorem, suppose that $f=h+\bar{g} \in \mathcal{M}(\alpha)$, where $h$ and $g$ are given by \eqref{eq1.1}. Then the area of the image $f(\mathbb{D})$ is
\begin{equation*}
\begin{split}
A&=\int \!\!\! \int_{\mathbb{D}}(|h'(z)|^{2}-|g'(z)|^{2})\,dx\,dy\\
 &=\int \!\!\! \int_{\mathbb{D}}|h'(z)|^2\,dx\,dy-|\alpha|^{2}\int \!\!\! \int_{\mathbb{D}}|zh'(z)|^2\,dx\,dy\\
 &=\pi\left(1+\sum_{n=2}^{\infty}n|a_n|^2\right)-|\alpha|^2\left(\frac{\pi}{2}+\pi \sum_{n=2}^{\infty}\frac{n^2}{n+1}|a_n|^2\right),\\
 &=\pi\left(1-\frac{1}{2}|\alpha|^2\right)+\pi\sum_{n=2}^{\infty}\left(n-\frac{n^2}{n+1}|\alpha|^2\right)|a_n|^2.
\end{split}
\end{equation*}
The last sum is minimized by choosing $a_n=0$ for $n=2,3,\ldots$. This gives $h(z)=z$ so that $g(z)=\alpha z^2/2$. This completes the proof of the theorem.
\end{proof}

If $\alpha=0$, then the family $\mathcal{M}(\alpha)$ reduces to the class of normalized analytic functions $f$ with $f(0)=0=f'(0)-1$ satisfying $\RE (1+zf''(z)/f'(z))>-1/2$ for all $z \in \mathbb{D}$. Ozaki \cite{ozaki} independently proved that the functions in the class $\mathcal{M}(0)$ are univalent in $\mathbb{D}$.

For analytic functions $f(z)=z+\sum_{n=2}^{\infty}a_{n}z^{n}$ and $F(z)=z+\sum_{n=2}^{\infty}A_{n}z^{n}$, their convolution (or Hadamard product) is defined as $(f*F)(z)=z+\sum_{n=2}^{\infty}a_{n}A_{n}z^{n}$. In the harmonic case, with
\begin{align*}
f&=h+\bar{g}=z+\sum_{n=2}^{\infty}a_{n}z^{n}+\overline{\sum_{n=1}^{\infty}b_{n}z^{n}},\quad \mbox{and}\\
F&=H+\bar{G}=z+\sum_{n=2}^{\infty}A_{n}z^{n}+\overline{\sum_{n=1}^{\infty}B_{n}z^{n}}.
\end{align*}
their harmonic convolution is defined as
\[f*F=h*H+\overline{g*G}=z+\sum_{n=2}^{\infty}a_{n}A_{n}z^{n}+\overline{\sum_{n=1}^{\infty}b_{n}B_{n}z^{n}}.\]
Results regarding harmonic convolution can be found in \cite{cluniesheilsmall,dorff2,sumit1,sumit2}.

\begin{remark}
Fix $\alpha$ with $|\alpha|\leq 1$. It is easy to see that the Hadamard product of two functions in $\mathcal{M}(\alpha)$ need not necessarily belong to $\mathcal{M}(\alpha)$. For instance, consider the function $f_{\alpha}$ given by \eqref{eq3.1}. The coefficients of the product $f_{\alpha}*f_{\alpha}$ are too large for this product to be in $\mathcal{M}(\alpha)$ in view of Theorem \ref{th3.1}(ii).
\end{remark}

In \cite{cluniesheilsmall}, Clunie and Sheil-Small showed that if $\varphi \in \mathcal{K}$ and $f \in \mathcal{K}_{H}$ then the functions $(\beta \overline{\varphi}+\varphi)*f \in \mathcal{C}_{H}$ ($|\beta|\leq 1$). The result is even true if $\mathcal{K}_{H}$ is replaced by $\mathcal{M}(\alpha)$.

\begin{theorem}
Let $\varphi \in \mathcal{K}$ and $f \in \mathcal{M}(\alpha)$ $(|\alpha|\leq 1)$. Then the functions $(\beta \overline{\varphi}+\varphi)*f \in \mathcal{C}_{H}^{0}$ for $|\beta|\leq 1$.
\end{theorem}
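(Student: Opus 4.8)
The plan is to reduce the harmonic statement to an analytic one via the Clunie--Sheil-Small criterion (Lemma \ref{lem1.1}) and then invoke the classical Ruscheweyh--Sheil-Small convolution theorem. First I would unwind the harmonic convolution. Writing $\varphi(z)=z+\sum_{n\ge2}c_nz^n$ and $f=h+\bar g$ with $h,g$ as in \eqref{eq1.1}, the definition of harmonic convolution gives
\[(\beta\overline{\varphi}+\varphi)*f=(\varphi*h)+\overline{\bar\beta\,(\varphi*g)},\]
so the candidate analytic part is $H=\varphi*h$ and the candidate co-analytic part is $G=\bar\beta(\varphi*g)$ (the passage $\beta\mapsto\bar\beta$ being harmless since $|\bar\beta|=|\beta|\le1$). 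The normalization hypothesis of Lemma \ref{lem1.1} is then immediate: $H'(0)=1$ while $G'(0)=\bar\beta b_1=0$, because $b_1=g'(0)=0$ for every $f\in\mathcal{M}(\alpha)$ by Theorem \ref{th3.1}(ii); this same fact will also pin the final membership down to the subclass $\mathcal{C}_H^0$.

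The crux is to verify that $H+\epsilon G$ is close-to-convex for every $|\epsilon|=1$. Using linearity of convolution I would collapse the expression to
\[H+\epsilon G=\varphi*h+\epsilon\bar\beta\,(\varphi*g)=\varphi*(h+\eta g),\qquad \eta:=\epsilon\bar\beta,\ |\eta|\le1.\]
Thus it suffices to show (a) that $F_\eta:=h+\eta g$ is close-to-convex whenever $|\eta|\le1$, and (b) that convolving the convex function $\varphi$ with a close-to-convex function yields a close-to-convex function. For (a) I would repeat the Poisson-kernel computation from the proof of Theorem \ref{th3.1}(i): since $g'=\alpha zh'$ gives $F_\eta'=h'(1+\alpha\eta z)$, one obtains
\[\RE\left(1+\frac{zF_\eta''(z)}{F_\eta'(z)}\right)=\RE\left(1+\frac{zh''(z)}{h'(z)}\right)+\frac12-\frac12P_\zeta(\theta)>-\frac12P_\zeta(\theta),\]
where now $\zeta=-\overline{\alpha\eta}\,r$ has modulus $|\alpha\eta|r<1$ (the only change from the $|\epsilon|=1$ case being $|\zeta|=|\alpha\eta|r$ in place of $|\alpha|r$). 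Integrating over an arc and using $P_\zeta\ge0$, $\int_0^{2\pi}P_\zeta\,d\theta=2\pi$ yields the Kaplan condition $\int_{\theta_1}^{\theta_2}\RE(1+zF_\eta''/F_\eta')\,d\theta>-\pi$; moreover $F_\eta'\ne0$ since $h'\ne0$ and $|\alpha\eta z|<1$, so $F_\eta$ is locally univalent, exactly as Kaplan's theorem requires, and hence close-to-convex.

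Assembling these, $H+\epsilon G=\varphi*F_\eta$ is close-to-convex for each $|\epsilon|=1$, so Lemma \ref{lem1.1} shows that $(\beta\overline{\varphi}+\varphi)*f=H+\overline{G}$ is harmonic, univalent and close-to-convex; together with $G'(0)=0$ and the normalization $H(0)=0$, $H'(0)=1$, this places it in $\mathcal{C}_H^0$. I expect the only genuinely non-elementary step to be (b): it rests on the deep P\'olya--Schoenberg / Ruscheweyh--Sheil-Small machinery ($\mathcal{K}*\mathcal{C}\subseteq\mathcal{C}$), which I would cite. Everything else is a direct adaptation of the already-established Theorem \ref{th3.1}(i) and a bookkeeping reduction of the harmonic convolution, the key observation being that the factor $\varphi\in\mathcal{K}$ pulls out of the convolution so that one need only feed the analytic close-to-convex functions $h+\eta g$ into the convex-convolution theorem.
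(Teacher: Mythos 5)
Your proposal is correct and follows essentially the same route as the paper: the same decomposition $H=\varphi*h$, $G=\bar\beta(\varphi*g)$, the same collapse $H+\epsilon G=\varphi*(h+\bar\beta\epsilon g)$, the appeal to $\mathcal{K}*\mathcal{C}\subset\mathcal{C}$, and Lemma \ref{lem1.1} to conclude. The only difference is that you explicitly rerun the Poisson-kernel/Kaplan computation to justify that $h+\eta g\in\mathcal{C}$ for all $|\eta|\le 1$ (not just $|\eta|=1$), a detail the paper asserts without proof; this is a worthwhile clarification but not a different argument.
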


\begin{proof}
Writing $f=h+\bar{g}$ we have
\[(\beta \overline{\varphi}+\varphi)*f=\varphi*h+\overline{\bar{\beta}(\varphi*g)}=H+\overline{G},\]
where $H=\varphi*h$ and $G=\bar{\beta}(\varphi*g)$ are analytic in $\mathbb{D}$ with $|G'(0)|<|H'(0)|$. Setting $F=H+\epsilon G=\varphi*(h+ \bar{\beta}\epsilon g)$ where $|\epsilon|=1$, we note that $F$ is close-to-convex in $\mathbb{D}$ since $h+ \bar{\beta}\epsilon g \in \mathcal{C}$, $\varphi \in \mathcal{K}$ and $\mathcal{K}*\mathcal{C}\subset \mathcal{C}$. By Lemma \ref{lem1.1}, it follows that $H+\overline{G}$ is harmonic close-to-convex, as desired.
\end{proof}

\begin{remark}\label{rem3.4}
The function $f_{-1} \in \mathcal{M}(-1)$ given by \eqref{eq3.1} is the harmonic half-plane mapping
\begin{equation}\label{eq3.2}
L(z):=f_{-1}(z)=\RE \left(\frac{z}{1-z}\right)+i \IM\left(\frac{z}{(1-z)^2}\right)
\end{equation}
constructed by shearing the conformal mapping $l(z)=z/(1-z)$ vertically with dilatation $w(z)=-z$ (see Figure \ref{fig3.1}(A)). Note that
\[(L*L)(z)=z+\sum_{n=2}^{\infty}\left(\frac{n+1}{2}\right)^2z^{n}+\overline{\sum_{n=2}^{\infty}\left(\frac{n-1}{2}\right)^2z^{n}},\quad z \in \mathbb{D}\]
is univalent in $\mathbb{D}$ by \cite[Theorem 3]{dorff2}. In fact, the image of the unit disk $\mathbb{D}$ under $L*L$ is $\mathbb{C}\backslash(-\infty,-1/4]$ which shows that $L*L \in \mathcal{S}_{H}^{*0}$. Since $f_{\alpha}*f_{\bar{\alpha}}=L*L$ for each $|\alpha|=1$, where the functions $f_{\alpha}$ are given by \eqref{eq3.1}, it follows that the Hadamard product $f_{\alpha}*f_{\bar{\alpha}}$ is univalent and starlike in $\mathbb{D}$ for each $|\alpha|=1$.
\end{remark}

The next theorem determines the bounds for the radius of starlikeness and convexity of the class $\mathcal{M}(\alpha)$.

\begin{theorem}\label{th3.5}
Let $\alpha \in \mathbb{C}$ with $|\alpha|\leq 1$.
\begin{itemize}
\item [(a)] Each function in $\mathcal{M}(\alpha)$ maps the disk $|z|<2-\sqrt{3}$ onto a convex domain.
\item [(b)] Each function in $\mathcal{M}(\alpha)$ maps the disk $|z|<4\sqrt{2}-5$ onto a starlike domain.
\end{itemize}
\end{theorem}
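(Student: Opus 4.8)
The plan is to work circle by circle, using the analytic characterizations of starlikeness and convexity for harmonic mappings together with the univalence (close-to-convexity) already granted by Theorem \ref{th3.1}(i): once we know that the image of every circle $|z|=\rho$ $(\rho<r)$ is a convex (resp.\ starlike) curve, it follows that $f$ maps $|z|<r$ onto a convex (resp.\ starlike) domain. Writing $z=\rho e^{i\theta}$ and using $\partial_\theta f=i(zh'-\overline{zg'})$, the image of $|z|=\rho$ is starlike iff $\partial_\theta\arg f\ge 0$, i.e.
\[\RE\frac{zh'(z)-\overline{zg'(z)}}{h(z)+\overline{g(z)}}\ge 0,\]
and convex iff $\partial_\theta\arg(\partial_\theta f)\ge 0$, i.e.
\[\RE\frac{(zh'+z^2h'')+\overline{(zg'+z^2g'')}}{zh'-\overline{zg'}}\ge 0.\]
Throughout I would substitute $g'(z)=\alpha zh'(z)$, introduce $P(z)=1+zh''(z)/h'(z)$ (so the hypothesis reads $\RE P>-1/2$, equivalently $P\prec(1+2z)/(1-z)$), and the factor $u=\overline{zg'(z)}/(zh'(z))=\bar\alpha\,\bar z^2\overline{h'(z)}/(zh'(z))$, whose modulus is exactly $|\alpha|\,|z|\le\rho$.

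For part (a), dividing the numerator and denominator of the convexity quotient by $zh'$ turns it into $C:=\RE\bigl((P+u\bar P+u)/(1-u)\bigr)$. The decisive simplification is that $P+u\bar P+u=a(1+u)+ib(1-u)+u$ with $a=\RE P$, $b=\IM P$, so dividing by $1-u$ leaves $ib$ plus a term free of $b$; hence $C=\RE\bigl((a(1+u)+u)/(1-u)\bigr)$ depends only on $a$ and $u$. Since $\RE((1+u)/(1-u))=(1-|u|^2)/|1-u|^2>0$, $C$ is increasing in $a$, so I replace $a$ by its least value $(1-2\rho)/(1+\rho)$ on $|z|=\rho$ (the minimum real part of $(1+2z)/(1-z)$, attained at $z=-\rho$). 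A short two-variable calculus argument in $s=|u|\le\rho$ and $\arg u$ then shows the minimum of $C$ occurs at $u=-\rho$ and equals $(\rho^2-4\rho+1)/(1+\rho)^2$, which is nonnegative exactly when $\rho\le 2-\sqrt3$. Sharpness for $\mathcal{M}(1)$ is immediate: the extremal $f_1=F$ of \eqref{eq3.1} has $P=(1+2z)/(1-z)$ and $u=-\rho$ at $z=-\rho$, so its boundary curve loses convexity precisely at $\rho=2-\sqrt3$.

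For part (b), since $zh'-\overline{zg'}=zh'(1-u)=:N$ and $\overline f=\bar h+g$, the starlikeness condition is equivalent to $\RE(N\overline f)\ge 0$, and I would expand
\[\RE(N\overline f)=\RE(zh'\bar h)+\RE(zh'g)-\RE(\overline{zg'}\,\bar h)-\RE(\overline{zg'}g).\]
Unlike in part (a) the quotient no longer collapses to $\RE P$ and $u$ alone, since $h$ and $g$ themselves (with $g(z)=\alpha\int_0^z t h'(t)\,dt$) now appear. I would bound each term from below using the sharp growth and distortion estimates for the analytic part of the class $\{\RE(1+zh''/h')>-1/2\}$ (all with extremal $(1-z)^{-3}$), together with the induced bounds on $|g|$ and $|zg'|$ coming from $g'=\alpha zh'$. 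Collecting the worst cases, which again fall on the negative real axis, reduces $\RE(N\overline f)\ge 0$ to $\rho^2+10\rho-7\ge 0$, i.e.\ $\rho\le 4\sqrt2-5$.

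The main obstacle is part (b): because the starlikeness quotient involves $h$ and $g$ and not merely their derivatives, the clean reduction to the two scalars $\RE P$ and $u$ that made part (a) sharp is unavailable, and one must instead feed in the sharp growth/distortion (and rotation) bounds for the underlying analytic functions and control the cross terms $\RE(zh'g)$ and $\RE(\overline{zg'}\,\bar h)$ carefully. This is precisely why $4\sqrt2-5$ need not be sharp while $2-\sqrt3$ is. A secondary point requiring justification in part (a) is that the minimal value of $\RE P$ and the value $u=-\rho$ are attained simultaneously at $z=-\rho$; the monotonicity computations above confirm this, and $f_1$ realizes it.
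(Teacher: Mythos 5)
Your part (a) is correct but follows a genuinely different route from the paper. The paper never touches the harmonic convexity quotient: it observes (from the proof of Theorem \ref{th3.1}(i)) that every $F_\lambda=h+\lambda g$, $|\lambda|=1$, is close-to-convex, invokes the classical radius of convexity $2-\sqrt{3}$ for close-to-convex analytic functions to conclude each $F_\lambda$ is convex in $|z|<2-\sqrt{3}$, and then cites a transfer theorem from \cite{sumit2} (if $h+\lambda g$ is convex for all $|\lambda|=1$ on a disk, then $h+\bar g$ is fully convex there). Your direct reduction of the quotient to $C=\RE\bigl((a(1+u)+u)/(1-u)\bigr)$ with $a=\RE P\ge (1-2\rho)/(1+\rho)$ and $|u|=|\alpha|\rho\le\rho$ is valid: the numerator $a(1-|u|^2)+\RE u-|u|^2$ is linear and decreasing in $\cos(\arg u)$ and then decreasing in $|u|$, so its minimum $1-4\rho+\rho^2$ over $|u|\le\rho$ is attained at $u=-\rho$, and this buys you sharpness for $\mathcal{M}(1)$ directly, which the paper only obtains afterwards via the separate computation of Example \ref{ex3.6}. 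You should still justify decoupling $a$ from $u$; the pointwise estimate $C\ge a_{\min}\RE\frac{1+u}{1-u}+\RE\frac{u}{1-u}$, legitimate because $\RE\frac{1+u}{1-u}=(1-|u|^2)/|1-u|^2>0$, does the job.

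Part (b), however, is not a proof as it stands, and this is a genuine gap. You propose to bound the four terms of $\RE(N\overline{f})$ separately using growth and distortion estimates, but such estimates control only the moduli $|h|$, $|zh'|$, $|g|$, $|zg'|$, whereas the sign of $\RE(zh'\bar h)$ and of the cross terms depends on the arguments, about which modulus bounds say nothing; term-by-term estimation of this kind cannot be expected to produce the constant $4\sqrt{2}-5$, and indeed you never derive the claimed inequality (which, as written, $\rho^2+10\rho-7\ge 0$, characterizes $\rho\ge 4\sqrt{2}-5$, the opposite of what you need). The step ``collecting the worst cases, which again fall on the negative real axis'' is an unsupported assertion. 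The paper's proof of (b) sidesteps all of this by running the same argument as in (a): each $F_\lambda=h+\lambda g$ is close-to-convex, hence starlike in $|z|<4\sqrt{2}-5$ by the classical radius-of-starlikeness result for close-to-convex analytic functions, and a second transfer theorem from \cite{sumit2} (starlikeness of all $h+\lambda g$ with $|\lambda|=1$ implies full starlikeness of $h+\bar g$) finishes the proof. To repair your part (b) you should either adopt that reduction or carry out an honest analysis of $\RE(zF_\lambda'(z)/F_\lambda(z))$; the modulus-bound strategy will not close.
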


\begin{proof}
Let $f=h+\bar{g} \in \mathcal{M}(\alpha)$. Then the analytic functions $F_{\lambda}=h+\lambda g$ are close-to-convex in $\mathbb{D}$ for each $|\lambda|=1$ (see the proof of Theorem \ref{th3.1}(i)).

Since the radius of convexity in close-to-convex analytic mappings is $2-\sqrt{3}$, the functions $F_{\lambda}$ are convex in $|z|<2-\sqrt{3}$. In view of \cite[Theorem 2.3, p.\ 89]{sumit2}, it follows that $f$ is fully convex (of order 0) in $|z|<2-\sqrt{3}$. This proves (a).

Similarly, since the radius of starlikeness for close-to-convex analytic mappings is $4\sqrt{2}-5$, it follows that each $F_{\lambda}$ is starlike in $|z|<4\sqrt{2}-5$. By \cite[Theorem 2.7, p.\ 91]{sumit2}, $f$ is fully starlike (of order 0) in $|z|<4\sqrt{2}-5\approx 0.65685$.
\end{proof}

Now, we shall show that the bound $2-\sqrt{3}$ for the radius of convexity is sharp for the class $\mathcal{M}(1)$. To see this, consider the function $f_{1}$ given by \eqref{eq3.1}, which may be rewritten as
 \begin{equation}\label{eq3.3}
F(z):=f_1(z)=\RE \left(\frac{z}{(1-z)^2}\right)+i \IM \left(\frac{z}{1-z}\right).
\end{equation}
Its worth to note that the function $F$ may be constructed by shearing the conformal mapping $l(z)=z/(1-z)$ horizontally with dilatation $w(z)=z$. In \cite{hayami}, it has been shown that $F(\mathbb{D})=\{u+iv:v^2>-(u+1/4)\}$ (see Figure \ref{fig3.1}(B)). In particular, $F \not\in \mathcal{S}_{H}^{*0}$. For instance, $z_0=-1-i \in F(\mathbb{D})$ but $z_{0}/2 \not\in F(\mathbb{D})$. In fact, $z_{0}/2 \in \partial F(\mathbb{D})$. Thus $\mathcal{M}(1) \not\subset \mathcal{S}_{H}^{*0}$.

The next example determines the radius of convexity of the mapping $F$ by employing a calculation similar to the one carried out in \cite[Section 3.5]{duren}.

\begin{example}\label{ex3.6}
For the purpose of computing the radius of convexity of $F$, it is enough to study the change of the tangent direction
\[\Psi_{r}(\theta)=\arg\left\{\frac{\partial}{\partial\theta}F(re^{i\theta})\right\}\]
of the image curve as the point $z= r e^{i\theta}$ moves around the circle $|z|=r$. Note that
\[\frac{\partial}{\partial\theta}F(re^{i\theta})=A(r,\theta)+iB(r,\theta),\]
where
\[|1-z|^{6}A(r,\theta)=-r[(1-6r^2+r^4)\sin\theta +r(1+r^2) \sin 2\theta]\]
and
\[|1-z|^{4}B(r,\theta)=r[(1-r^2)\cos\theta-2r].\]

\begin{figure}[here]
\centering
\includegraphics{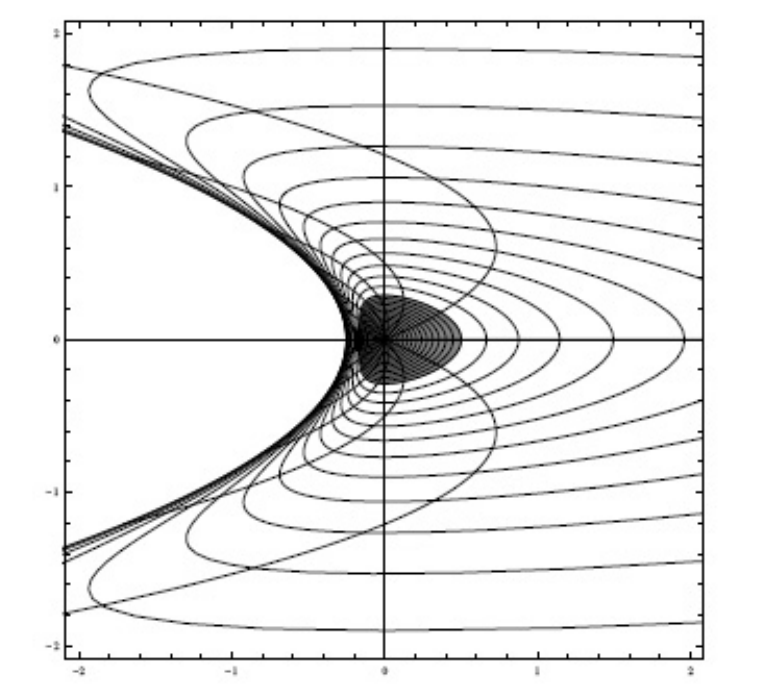}
\caption{$2-\sqrt{3}$ - the radius of convexity of $F$.}\label{fig3}
\end{figure}

The problem now reduces to finding the values of $r$ such that the argument of the tangent vector, or equivalently
\[\tan \Psi_{r}(\theta)=\frac{B(r,\theta)}{A(r,\theta)}=
\frac{(1-2r\cos{\theta}+r^{2})[2r-(1-r^{2})\cos{\theta}]}{(1-6r^2+r^4)\sin\theta +r(1+r^2) \sin 2\theta}\]
is a non-decreasing function of $\theta$ for $0<\theta<\pi$. A lengthy calculation leads to an expression for the derivative in the form
\[[(1-6r^2+r^4)+2r(1+r^2)u]^2(1-u^2)\frac{\partial}{\partial\theta}\tan \Psi_{r}(\theta)=p(r,u),\]
where $u=\cos \theta$ and
\begin{align*}
p(r,u)=1 + 4 r^2 - 26 r^4 + 4 r^6 &+ r^8 -6 u r (1 + r^2) (1 + r^4 - 6 r^2) \\
&- 12 r^2 u^2 (1 + r^2)^2 + 4 r u^3 (1 + r^2) (1 + r^4).
\end{align*}

Observe that the roots of $p(r,u)=0$ in $(0,1)$ are increasing as a function of $u \in [-1,1]$. Consequently, it follows that $p(r,u)\geq 0$ for $-1\leq u\leq 1$ if and only if
\[p(r,-1)=(1+r)^6(1-4r+r^2)\geq 0.\]
This inequality implies that $r\leq 2-\sqrt{3}$. This proves that the tangent angle $\Psi_{r}(\theta)$ increases monotonically with $\theta$ if $r\leq 2-\sqrt{3}$ but is not monotonic for $2-\sqrt{3}<r<1$. Thus, the harmonic mapping $F$ given by \eqref{eq3.3} sends each disk $|z|<r\leq 2-\sqrt{3}$ to a convex domain, but the image is not convex when $2-\sqrt{3}<r<1$ (see Figure \ref{fig3}).
\end{example}

Combining Theorem \ref{th3.5} and Example \ref{ex3.6}, it immediately follows that

\begin{theorem}
The radius of convexity of the class $\mathcal{M}(1)$ is $2-\sqrt{3}$. Moreover, the bound $2-\sqrt{3}$ is sharp.
\end{theorem}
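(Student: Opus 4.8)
The plan is to establish the equality by the standard two-sided argument for the radius of a class: the radius of convexity of $\mathcal{M}(1)$ is, by definition, the supremum of those $\rho\in(0,1)$ for which \emph{every} member of $\mathcal{M}(1)$ maps $|z|<\rho$ onto a convex domain. I would prove a lower bound of $2-\sqrt{3}$ showing this much convexity is guaranteed for all functions in the class, and then a matching upper bound by exhibiting one function whose convexity fails the instant $\rho$ exceeds $2-\sqrt{3}$.

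For the lower bound I would simply specialize Theorem \ref{th3.5}(a) to $\alpha=1$. That theorem asserts that each $f\in\mathcal{M}(\alpha)$ maps the disk $|z|<2-\sqrt{3}$ onto a convex domain, so in particular every $f\in\mathcal{M}(1)$ is (fully) convex on $|z|<2-\sqrt{3}$. Hence the radius of convexity of $\mathcal{M}(1)$ is at least $2-\sqrt{3}$. For the reverse inequality, hence sharpness, I would invoke Example \ref{ex3.6}, which singles out the concrete extremal $F=f_1\in\mathcal{M}(1)$ of \eqref{eq3.3} and shows that the tangent direction $\Psi_r(\theta)$ of the image of $|z|=r$ is monotonically increasing in $\theta$ exactly when $r\leq 2-\sqrt{3}$, failing to be monotone for every $r$ with $2-\sqrt{3}<r<1$. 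Since failure of this monotonicity means precisely that $F$ does not map $|z|<r$ onto a convex domain, no $\rho>2-\sqrt{3}$ can work for the whole class. Thus the radius of convexity is at most $2-\sqrt{3}$, and combining the two inequalities yields exactly $2-\sqrt{3}$, with the function $F$ serving as the extremal witness that makes the bound sharp.

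Because both Theorem \ref{th3.5} and Example \ref{ex3.6} are already in hand, the combination itself is immediate; the genuine work — the lengthy tangent-angle derivative computation reducing convexity to the sign of $p(r,u)$, and the boundary analysis $p(r,-1)=(1+r)^6(1-4r+r^2)\geq 0 \iff r\leq 2-\sqrt{3}$ — has been discharged in Example \ref{ex3.6}. The only point that warrants care, and which I would regard as the main (minor) obstacle, is reconciling the two notions of convexity being matched here: Theorem \ref{th3.5}(a) delivers \emph{full convexity of order $0$} via the criterion of \cite{sumit2}, whereas Example \ref{ex3.6} certifies geometric convexity of the image through the monotone increase of the tangent angle $\Psi_r(\theta)$. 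I would verify that these describe the same property on the relevant disks, so that the guaranteed region of Theorem \ref{th3.5}(a) and the failure region detected in Example \ref{ex3.6} abut precisely at $r=2-\sqrt{3}$, leaving no gap between the lower and upper bounds.
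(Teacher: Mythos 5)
Your proposal is correct and follows exactly the paper's argument: the theorem is stated there as an immediate combination of Theorem \ref{th3.5}(a) (specialized to $\alpha=1$) for the lower bound and Example \ref{ex3.6} (the extremal $F=f_1$) for sharpness. Your extra remark about reconciling full convexity of order $0$ with the tangent-angle criterion is a sensible point of care but does not alter the route.
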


The next example determines the radius of starlikeness of the mapping $F$ given by \eqref{eq3.3}.

\begin{example}\label{ex3.8}
The harmonic mapping $F$ given by \eqref{eq3.1}  sends each disk $|z|<r\leq r_0$ to a starlike domain, but the image is not starlike when $r_{0}<r<1$, where $r_{0}$ is given by \begin{equation}\label{eq3.2}
r_{0}=\frac{1}{3}\sqrt{\frac{1}{3}(37-8\sqrt{10})}\approx 0.658331.
\end{equation}
In this case, one needs to study the change of the direction $\Phi_{r}(\theta)=\arg F(r e^{i\theta})$ of the image curve as the point $z=r e^{i\theta}$ moves around the circle $|z|=r$. A direct calculation gives
\[F(r e^{i\theta})=C(r,\theta)+iD(r,\theta),\]
where
\[|1-z|^4C(r,\theta)=r[ (1+r^2)\cos \theta-2r]\quad \mbox{and}\quad |1-z|^{2}D(r,\theta)=r\sin\theta.\]
For our assertion, it suffices to show that
\[\tan \Phi_{r}(\theta)=\frac{D(r,\theta)}{C(r,\theta)}=\frac{\sin\theta(1-2r \cos\theta+r^2)}{(1+r^2)\cos\theta-2r}\]
is a nondecreasing function of $\theta$. A straightforward calculation leads to an expression for the derivative in the form
\[[(1+r^2)u-2r]^2 \frac{\partial}{\partial \theta}\tan \Phi_{r}(\theta)=q(r,u),\]
where $u=\cos\theta$ and
\[q(r,u)=(1-r^2)^2-2ru(1+r^2)+8r^2 u^2-2r(1+r^2)u^3.\]
The problem is now to find the values of the parameter $r$ for which the polynomial $q(r,u)$ is non-negative in the whole interval $-1\leq u \leq 1$. Observe that
\[q(r,-1)=(1+r)^4>0\quad\mbox{and}\quad q(r,1)=(1-r)^4>0.\]
Also, differentiation gives
\[\frac{\partial}{\partial u}q(r,u)=-2r(1+r^2)+16r^2u-6r(1+r^2)u^2,\]
showing that $q(r,u)$ has a local minimum at $u=(4r-\sqrt{-3+10r^2-3r^4})/(3(1+r^2))$ and a local maximum at $u=(4r+\sqrt{-3+10r^2-3r^4})/(3(1+r^2))$. Thus $q(r,u)\geq 0$ for $-1\leq u\leq 1$ if and only if
\begin{align*}
q\left(r,\frac{4r-\sqrt{-3+10r^2-3r^4}}{3(1+r^2)}\right)&=\frac{1}{27(1+r^2)^2}[27-72 r^2+58 r^4-72 r^6+27 r^8\\
                                                                                        &+4r(3+10r^2+3r^4)\sqrt{-3+10r^2-3r^4}] \geq 0.
\end{align*}
This inequality implies that $r\leq r_{0}$, where $r_0$ is given by \eqref{eq3.1}. This proves that the angle $\Phi_{r}(\theta)$ increases monotonically with $\theta$ if $r\leq r_{0}$ and hence the harmonic mapping $F$ sends each disk $|z|<r \leq r_{0}$ to a starlike domain, but the image is not starlike when $r_{0}<r<1$ (see Figure \ref{fig4}).

\begin{figure}[here]
\centering
\includegraphics{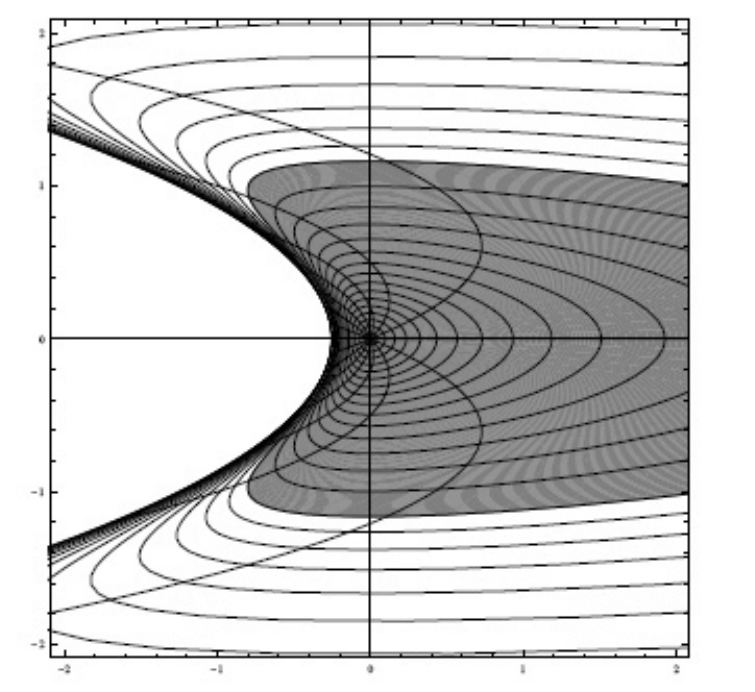}
\caption{$\frac{1}{3}\sqrt{\frac{1}{3}(37-8\sqrt{10})}$ - the radius of starlikeness of $F$.}\label{fig4}
\end{figure}

\end{example}

Combining Theorem \ref{th3.5} and Example \ref{ex3.8}, we have

\begin{theorem}
If $r_{S}$ is the radius of starlikeness of $\mathcal{M}(1)$, then
\[4\sqrt{2}-5\leq r_{S}\leq \frac{1}{3}\sqrt{\frac{1}{3}(37-8\sqrt{10})}.\]
\end{theorem}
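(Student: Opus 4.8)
The plan is to obtain the claimed two-sided estimate by combining the class-wide lower bound from Theorem~\ref{th3.5}(b) with the exact radius of starlikeness of a single extremal member of $\mathcal{M}(1)$ computed in Example~\ref{ex3.8}. Recall that the radius of starlikeness $r_S$ of $\mathcal{M}(1)$ is, by definition, the largest $r$ with the property that every $f\in\mathcal{M}(1)$ maps the disk $|z|<r$ onto a starlike domain; equivalently, $r_S$ is the infimum over all $f\in\mathcal{M}(1)$ of the individual radii of starlikeness of the members $f$. The proof is therefore a sandwich argument: establish $r_S\geq 4\sqrt{2}-5$ from the class bound, and $r_S\leq r_0$ from one explicit function.

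For the lower bound I would simply invoke Theorem~\ref{th3.5}(b) with $\alpha=1$. That result asserts that every function in $\mathcal{M}(\alpha)$ with $|\alpha|\leq 1$ maps $|z|<4\sqrt{2}-5$ onto a starlike domain, so in particular every function in $\mathcal{M}(1)$ does. By the definition of $r_S$ recalled above, this yields $r_S\geq 4\sqrt{2}-5$ at once.

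For the upper bound I would exhibit a witness showing that $r_S$ cannot exceed $r_0$. The function $F=f_1$ given by \eqref{eq3.3} lies in $\mathcal{M}(1)$, and Example~\ref{ex3.8} shows that the image of $|z|<r$ under $F$ fails to be starlike whenever $r_0<r<1$, where $r_0=\tfrac{1}{3}\sqrt{\tfrac{1}{3}(37-8\sqrt{10})}$. Hence the individual radius of starlikeness of $F$ equals $r_0$; since $F$ is one member of the class, taking the infimum over the class gives $r_S\leq r_0$. Combining the two inequalities gives the asserted estimate.

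The substantive analytic work — the monotonicity analysis of $\tan\Phi_r(\theta)$ and the reduction of starlikeness to the sign of the polynomial $q(r,u)$ on $[-1,1]$ — has already been carried out in Example~\ref{ex3.8}, so no new estimate is required here; the only point that needs care is the bookkeeping that distinguishes the radius of starlikeness of the \emph{class} from that of the single function $F$. The genuine obstacle is not the proof of this statement but the residual gap between $4\sqrt{2}-5\approx 0.65685$ and $r_0\approx 0.658331$: the lower bound in Theorem~\ref{th3.5}(b) is inherited, via the criterion of \cite{sumit2}, from the Euclidean radius of starlikeness for close-to-convex analytic mappings, and there is no reason for it to be attained inside $\mathcal{M}(1)$. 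Closing the gap would demand either a sharper harmonic starlikeness criterion or a proof that $F$ is extremal for the whole class, and I would leave the exact value of $r_S$ open.
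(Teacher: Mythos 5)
Your proposal is correct and follows exactly the paper's argument: the theorem is obtained by combining Theorem~\ref{th3.5}(b) (giving $r_S\geq 4\sqrt{2}-5$ for the whole class) with Example~\ref{ex3.8} (the witness $F=f_1\in\mathcal{M}(1)$ whose image of $|z|<r$ fails to be starlike for $r_0<r<1$, giving $r_S\leq r_0$). Your remark that the residual gap between the two bounds is left open also matches the paper, which does not claim sharpness here.
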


By Remark \ref{rem3.4}, $F*F$ is univalent and starlike in $\mathbb{D}$. However, the product $L*F$ where $L$ is the harmonic half-plane mapping given by \eqref{eq3.2} is not even univalent, although it is sense-preserving in $\mathbb{D}$. In fact, the convolution of $F$ with certain right-half plane mappings is sense-preserving in $\mathbb{D}$. This is seen by the following theorem.

\begin{theorem}\label{th3.6}
Let $f=h+\bar{g} \in \mathcal{K}_{H}^{0}$ with $h(z)+g(z)=z/(1-z)$ and $w(z)=g'(z)/h'(z)=e^{i\theta} z^{n}$, where $\theta \in \mathbb{R}$. If $n=1,2$ then $F*f$ is locally univalent in $\mathbb{D}$, $F$ being given by \eqref{eq3.1}.
\end{theorem}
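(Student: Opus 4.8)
The plan is to reduce the whole statement to showing that the dilatation of $F*f$ has modulus less than $1$, since a harmonic mapping is locally univalent and sense-preserving precisely when its dilatation has modulus less than $1$ (equivalently, its Jacobian is positive). First I would record the convolution structure of $F$. With $\ell(z)=z/(1-z)$ and the Koebe function $k(z)=z/(1-z)^2$, the mapping $F$ of \eqref{eq3.3} has analytic and co-analytic parts $H=\tfrac12(\ell+k)$ and $G=\tfrac12(k-\ell)$. Because $\ell$ is the identity for Hadamard convolution and $k*p=zp'$ for every $p(z)=\sum_{m\ge1}p_mz^m$, the harmonic convolution becomes
\[F*f=h_*+\overline{g_*},\qquad h_*=\tfrac12\bigl(h+zh'\bigr),\quad g_*=\tfrac12\bigl(zg'-g\bigr),\]
so that $h_*'=\tfrac{h'}{2}(2+S)$ with $S:=zh''/h'$, and $g_*'=\tfrac12\,zg''$.

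Next I would express the dilatation $\widetilde w:=g_*'/h_*'$ through the data of $f$. From $g'=wh'$ and $w(z)=e^{i\theta}z^n$ one has $zw'=nw$, hence $zg''=h'\,w\,(n+S)$ and
\[\widetilde w=w\,\frac{n+S}{2+S}.\]
The hypotheses $h+g=\ell$ and $g'=wh'$ give $h'=1/\bigl((1-z)^2(1+w)\bigr)$, whence $S=2z/(1-z)-nw/(1+w)$; substituting and clearing the factor $(1-z)(1+w)$ yields
\[\frac{n+S}{2+S}=\frac{2z(1+w)+n(1-z)}{2(1+w)-nw(1-z)}=:\frac{A}{B}.\]
Thus the whole statement reduces to the single inequality $|wA|<|B|$ in $\mathbb{D}$, which also guarantees $B\neq0$, hence $h_*'\neq0$.

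The case $n=2$ should then be immediate: the term $n(1-z)=2(1-z)$ collapses both numerator and denominator to $A=B=2(1+zw)$, so $\widetilde w=w$ and $|\widetilde w|=|z|^{2}<1$ throughout $\mathbb{D}$. The case $n=1$ carries the real content. Here $w=e^{i\theta}z$ and $A=1+z+2zw$, $B=2+w(1+z)$; writing $c=e^{i\theta}$ and $r=|z|$ and expanding $|B|^2$ and $|wA|^2$, I expect the clean identity
\[|B|^2-|wA|^2=4(1-r^6)+4\,\RE(cz)\,(1-r^4)+4\,\RE(cz^2)\,(1-r^2).\]
Using the termwise bounds $\RE(cz)\ge-r$ and $\RE(cz^2)\ge-r^2$ together with $1-r^4\ge0$ and $1-r^2\ge0$ gives
\[|B|^2-|wA|^2\ge 4\bigl[(1-r^6)-r(1-r^4)-r^2(1-r^2)\bigr]=4(1-r)^3(1+r)(1+r+r^2)>0\]
for $0<r<1$, which is exactly the desired inequality.

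The main obstacle is entirely in the $n=1$ estimate. Producing the closed form for $|B|^2-|wA|^2$ is a moderately long but routine expansion; the genuinely delicate point is noticing that the positive coefficients $1-r^4$ and $1-r^2$ permit termwise minimization, so the two-variable problem collapses to the one-variable polynomial $(1-r^6)-r(1-r^4)-r^2(1-r^2)$, and then recognizing its factorization $(1-r)^3(1+r)(1+r+r^2)$, which renders positivity on $(0,1)$ transparent. Everything preceding this reduction is the formal convolution calculus above, and the restriction to $n\le2$ is precisely what keeps $A$ and $B$ controllable by such elementary means.
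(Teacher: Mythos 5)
Your argument is correct, and its first half coincides with the paper's: the paper also reduces the theorem to showing $|\widetilde w|<1$ for the dilatation of $F*f$ and records a formula for $\widetilde w$ which, after substituting $zw'=nw$, is exactly your $\widetilde w = wA/B$ with $A=2z(1+w)+n(1-z)$ and $B=2(1+w)-nw(1-z)$; your derivation via $h_*=\tfrac12(h+zh')$, $g_*=\tfrac12(zg'-g)$ and $S=\tfrac{2z}{1-z}-\tfrac{nw}{1+w}$ is a clean way to get there. Where you diverge is the endgame: the paper stops at the dilatation formula and says ``the rest of the proof is similar to \cite[Theorem 3]{dorff2}'', which in that reference is carried out by applying Cohn's rule to the polynomial numerator and denominator of $\widetilde w$ (locating the zeros of the numerator relative to the unit circle). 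You instead observe that $n=2$ trivializes ($A=B=2(1+zw)$, so $\widetilde w=e^{i\theta}z^2$) and, for $n=1$, expand $|B|^2-|wA|^2=4(1-r^6)+4\RE(cz)(1-r^4)+4\RE(cz^2)(1-r^2)$ and minimize termwise to reach $4(1-r)^3(1+r)(1+r+r^2)>0$ --- I have checked both the identity and the factorization. Your route is more self-contained and entirely elementary (no Cohn's rule, no zero-counting), at the cost of being tied to the specific low-degree structure that $n\le 2$ provides; the Cohn-rule argument of the cited paper is the one that generalizes to other dilatations. You also correctly note the side conditions ($B\ne0$ hence $h_*'\ne0$) that make ``locally univalent'' legitimate.
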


\begin{proof}
We need to show that the dilatation $\widetilde{w}$ of $F*f$ satisfies $|\widetilde{w}(z)|<1$ for all $z \in \mathbb{D}$. It is an easy exercise to derive the expression of dilatation $\widetilde{w}$ in the form
\[\widetilde{w}(z)=z \frac{w^2(z)+[w(z)-\frac{1}{2}w'(z)z]+\frac{1}{2}w'(z)}{1+[w(z)-\frac{1}{2}w'(z)z]+\frac{1}{2}w'(z)z^2}, \quad z \in \mathbb{D}.\]
The rest of the proof is similar to \cite[Theorem 3]{dorff2}.
\end{proof}

\end{document}